\title{Integral representations for the Hartman--Watson density}
\author{Yuu Hariya\thanks{Supported in part by JSPS KAKENHI Grant Number 17K05288}}
\date{\empty}
\numberwithin{equation}{section}
\theoremstyle{plain}
\newtheorem{thm}{Theorem}[section]
\newtheorem{prop}{Proposition}[section]
\newtheorem{lem}{Lemma}[section]
\theoremstyle{definition}
\theoremstyle{remark}
\newtheorem{rem}{Remark}[section]
\begin{document}

\newcommand\ND{\newcommand}
\newcommand\RD{\renewcommand}

\ND\N{\mathbb{N}}
\ND\R{\mathbb{R}}
\ND\Q{\mathbb{Q}}
\ND\C{\mathbb{C}}

\ND\F{\mathcal{F}}

\ND\kp{\kappa}

\ND\ind{\boldsymbol{1}}

\ND\al{\alpha }
\ND\la{\lambda }
\ND\La{\Lambda }
\ND\ve{\varepsilon}
\ND\Om{\Omega}

\ND\ga{\gamma}

\ND\lref[1]{Lemma~\ref{#1}}
\ND\tref[1]{Theorem~\ref{#1}}
\ND\pref[1]{Proposition~\ref{#1}}
\ND\sref[1]{Section~\ref{#1}}
\ND\ssref[1]{Subsection~\ref{#1}}
\ND\aref[1]{Appendix~\ref{#1}}
\ND\rref[1]{Remark~\ref{#1}} 
\ND\cref[1]{Corollary~\ref{#1}}
\ND\eref[1]{Example~\ref{#1}}
\ND\fref[1]{Fig.\ {#1} }
\ND\lsref[1]{Lemmas~\ref{#1}}
\ND\tsref[1]{Theorems~\ref{#1}}
\ND\dref[1]{Definition~\ref{#1}}
\ND\psref[1]{Propositions~\ref{#1}}
\ND\rsref[1]{Remarks~\ref{#1}}
\ND\sssref[1]{Subsections~\ref{#1}}

\ND\pr{\mathbb{P}}
\ND\ex{\mathbb{E}}
\ND\br{W}

\ND\prb[2]{P^{(#1)}_{#2}}
\ND\exb[2]{E^{(#1)}_{#2}}

\ND\eb[1]{e^{B_{#1}}}
\ND\ebm[1]{e^{-B_{#1}}}
\ND\hbe{\Hat{\beta}}
\ND\hB{\Hat{B}}
\ND\argsh{\mathrm{Argsh}\,}
\ND\zmu{z_{\mu}}
\ND\GIG[3]{\mathrm{GIG}(#1;#2,#3)}
\ND\gig[3]{I^{(#1)}_{#2,#3}}
\ND\argch{\mathrm{Argch}\,}

\ND\vp{\varphi}
\ND\eqd{\stackrel{(d)}{=}}
\ND\db[1]{B^{(#1)}}
\ND\da[1]{A^{(#1)}}

\ND\anu{\alpha ^{(\nu )}}

\ND\Ga{\Gamma}
\ND\calE{\mathcal{E}}
\ND\calD{\mathcal{D}}

\ND\f{F}
\ND\g{G}

\def\thefootnote{{}}

\maketitle 
\begin{abstract}
This paper concerns the density of the Hartman--Watson law. 
Yor (1980) obtained an integral formula that gives a closed-form 
expression of the Hartman--Watson density. In this paper, based on 
Yor's formula, we provide alternative integral representations for the 
density. As an immediate application, we recover 
in part a Dufresne's result (2001) that exhibits remarkably 
simple representations for the laws of exponential additive 
functionals of Brownian motion.
\footnote{Mathematical Institute, Tohoku University, Aoba-ku, Sendai 980-8578, Japan}
\footnote{E-mail: hariya@tohoku.ac.jp}
%%\footnote{Tel.: +81-22-795-6385}
\footnote{{\itshape Key Words and Phrases}:~{Brownian motion}; {exponential functional}; {Hartman--Watson law}}
\footnote{{\itshape MSC 2010 Subject Classifications}:~Primary~{60J65}; Secondary~{60J55}, {60E10}}
%%\footnote{{\itshape Running head}: }
\end{abstract}

%%%%%% New section %%%%%%
\section{Introduction}\label{;intro}

Let $B=\{ B_{t}\} _{t\ge 0}$ be a one-dimensional standard Brownian motion. 
For every $\mu \in \R $, we denote by 
$\db{\mu }=\{ \db{\mu }_{t}:=B_{t}+\mu t\} _{t\ge 0}$ the Brownian motion 
with constant drift $\mu $ and set 
\begin{align*}
 \da{\mu }_{t}:=\int _{0}^{t}e^{2\db{\mu }_{s}}ds, \quad t\ge 0;  
\end{align*}
when $\mu =0$, we simply write $A_{t}$ for $\da{\mu }_{t}$. 
This additive functional, together with the geometric Brownian motion 
$e^{\db{\mu }_{t}},\,t\ge 0$, plays an important role in a number of areas 
such as option pricing in mathematical finance, 
diffusion processes in random environments, probabilistic study of 
Laplacians on hyperbolic spaces, and so on; see the detailed surveys 
\cite{mySI,mySII} by Matsumoto--Yor and references therein. 

In \cite{yor92}, Yor proved that for every $t>0$, the joint law 
of $B_{t}$ and $A_{t}$ is given by 
\begin{align}\label{;jl1}
 \pr \!\left( 
 B_{t}\in dx,\,A_{t}\in dv
 \right) 
 =\frac{1}{v}\exp \left\{ 
 -\frac{1}{2v}\left( 1+e^{2x}\right) 
 \right\} \Theta (e^{x}/v,t)\,dxdv,\quad x\in \R ,\,v>0, 
\end{align}
or equivalently, 
\begin{align}\label{;jl2}
 \pr \left( \eb{t}\in du,\,A_{t}\in dv\right) 
 =\frac{1}{uv}\exp \left( -\frac{1+u^{2}}{2v}\right) 
 \Theta (u/v,t)\,dudv, \quad u,v>0, 
\end{align}
where for every $r>0$, the function $\Theta (r,t),\,t>0$, is the 
(unnormalized) density of the so-called Hartman--Watson distribution 
(\cite{hw}) which is characterized by the Laplace transform 
\begin{align}\label{;LTt}
 \int _{0}^{\infty }dt\,
 \exp \left( -\frac{\la ^{2}}{2}t\right) 
 \Theta (r,t)
 =I_{|\la |}(r),\quad \la \in \R . 
\end{align}
Here for every index $\nu \in \R $, 
the function $I_{\nu }$ is the modified Bessel function 
of the first kind of order $\nu $; see \cite[Section~5.7]{leb} 
for the definition. By the Cameron--Martin relation, we see from 
\eqref{;jl2} that for every $\mu \in \R $ and $t>0$, the law of 
$\da{\mu }_{t}$ is expressed as 
\begin{align}
 \frac{\pr (\da{\mu }_{t}\in dv)}{dv}
 &=\frac{1}{v}\exp \left( 
 -\frac{1}{2v}-\frac{\mu ^{2}}{2}t
 \right) \int _{0}^{\infty }\frac{du}{u}\,u^{\mu }\exp \left( 
 -\frac{u^{2}}{2v}\right) \Theta (u/v,t)\notag \\
 &=v^{\mu -1}\exp \left( 
 -\frac{1}{2v}-\frac{\mu ^{2}}{2}t
 \right) \int _{0}^{\infty }dr\,r^{\mu -1}\exp \left( 
 -\frac{v}{2}r^{2}
 \right) \Theta (r,t),\quad v>0, \label{;Amlaw0}
\end{align}
where for the second line, we changed the variables with $u=vr,\,r>0$. 

It is also proven by Yor \cite{yor80} that the function $\Theta $ 
admits the following integral representation: for every $r>0$ and $t>0$, 
\begin{align}\label{;irepr0}
 &\Theta (r,t)=\frac{r}{\sqrt{2\pi ^3t}}
 \int _{0}^\infty dy\,\exp \left( 
 \frac{\pi ^2-y^2}{2t}
 \right) \exp \left( 
 -r\cosh y
 \right) \sinh y\sin \left( \frac{\pi y}{t}\right) , 
\end{align}
which may be rephrased conveniently as 
\begin{align}\label{;irepr0d}
  \Theta (r,t)=
  \frac{r}{2\pi }\exp \left( 
  \frac{\pi ^{2}}{2t}
  \right) 
  \ex \!\left[ 
  \exp \left( -r\cosh B_{t}\right) \sinh B_{t}
  \sin \left( \frac{\pi }{t}B_{t}\right) 
  \right] ,
\end{align}
by the fact that the function $\R \ni y\mapsto \sinh y\sin (\pi y/t)$ is 
symmetric. We also refer to \cite[p.~82]{bs} for \eqref{;irepr0}, 
and \cite[p.~175, formula~1.10.8]{bs} for the joint law 
\eqref{;jl1}. 

The function $\Theta $ as well as formula \eqref{;irepr0} 
have continuously attracted researchers' attention particularly from 
the view point of evaluation of Asian options in the 
Black--Scholes framework; see, e.g., \cite{sch, cs, bm, lya} and 
references therein.  Yor obtained \eqref{;irepr0} by inverting the 
Laplace transform \eqref{;LTt}, reasoning of which is also 
reproduced in \cite[Appendix~A]{mySI}. 
In \cite[Subsection~A.3]{har}, we explain 
\eqref{;irepr0} via 
\begin{align}\label{;LTr}
 \int _{0}^{\infty }\frac{dr}{r}\,e^{-r\cosh x}\Theta (r,t)
 =\frac{1}{\sqrt{2\pi t}}\exp \left( 
  -\frac{x^{2}}{2t}
  \right) ,\quad t>0,\, x\in \R , 
\end{align}
which relation is found in, e.g., \cite[Proposition~4.5(i)]{myPI} and, 
as was observed in \cite[Proposition~4.2]{mySI}, may be obtained 
by integrating both sides of \eqref{;jl1} with respect to $v$. 
In this paper, we continue our discussion of \cite[Subsection~A.3]{har} 
and, based on Yor's formula \eqref{;irepr0} (or \eqref{;irepr0d}), 
aim at providing the following alternative representations of $\Theta $: 

\begin{thm}\label{;mt}
For every $r>0$ and $t>0$, it holds that 
\begin{align}
  \Theta (r,t)&=\frac{r}{\pi }\exp \left( \frac{\pi ^{2}}{8t}\right) 
  \ex \!\left[ 
  \cosh B_{t}\cos (r\sinh B_{t})\cos \left( 
  \frac{\pi }{2t}B_{t}
  \right) 
  \right] \label{;irepr1}\\
  &=\frac{r}{\pi }\exp \left( \frac{\pi ^{2}}{8t}\right) 
  \ex \!\left[ 
  \cosh B_{t}\sin (r\sinh B_{t})\sin \left( 
  \frac{\pi }{2t}B_{t}
  \right) 
  \right] \label{;irepr2}\\
  &=\frac{r}{2\pi }\exp \left( \frac{\pi ^{2}}{8t}\right) 
  \ex \!\left[ 
  \cosh B_{t}\cos \left( r\sinh B_{t}-
  \frac{\pi }{2t}B_{t}
  \right) 
  \right] . \label{;irepr3}
 \end{align}
 More generally, we have for every $r>0$ and $t>0$, 
 \begin{align}\label{;irepr4}
  \Theta (r,t)&=\frac{r}{\pi }\exp \left( \frac{\pi ^{2}}{8t}\right) 
  \ex \!\left[ 
  \cosh B_{t}\cos (r\sinh B_{t}-\nu )\cos \left( 
  \frac{\pi }{2t}B_{t}-\nu 
  \right) 
  \right] , 
 \end{align}
 where $\nu \in \R $ is arbitrary. Representations 
 \eqref{;irepr1}, \eqref{;irepr2} and \eqref{;irepr3} may be seen as 
 the case $\nu =0,\pi /2,\pi /4$, respectively. 
\end{thm}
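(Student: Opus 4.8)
The plan is to reduce Yor's identity \eqref{;irepr0d} to a single Gaussian integral over $\R$ and then to deform the contour of integration by the purely imaginary amount $\pm i\pi/2$. The mechanism is the pair of elementary identities $\cosh(w\pm i\pi/2)=\pm i\sinh w$ and $\sinh(w\pm i\pi/2)=\pm i\cosh w$: the first turns the real weight $e^{-r\cosh w}$ in \eqref{;irepr0d} into the oscillatory factor $e^{\mp ir\sinh w}$ that appears in \tref{;mt}, while the second interchanges the roles of $\cosh$ and $\sinh$. Concretely, writing $\sin(\pi y/t)=\mathrm{Im}\,e^{i\pi y/t}$ and using that the integrand in \eqref{;irepr0} is even, I would record Yor's identity as $\Theta(r,t)=\frac{r}{2\pi}e^{\pi^2/2t}\,\mathrm{Im}\,\frac{1}{\sqrt{2\pi t}}\int_{\R}\Phi(y)\,dy$ with the entire function $\Phi(z)=e^{-z^2/2t}e^{-r\cosh z}\sinh z\,e^{i\pi z/t}$.

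First I would move the contour from $\R$ to $\R+i\pi/2$. Since $\Phi$ is entire, Cauchy's theorem reduces this to checking that the two vertical sides $\{\pm R+is:0\le s\le\pi/2\}$ contribute nothing as $R\to\infty$. This is where the choice of strip matters: the Gaussian supplies $|e^{-z^2/2t}|=e^{-(R^2-s^2)/2t}$, and, crucially, $\mathrm{Re}\cosh z=\cosh R\cos s\ge0$ for $0\le s\le\pi/2$, so that $|e^{-r\cosh z}|\le1$ there; since $|\sinh z|$ grows only like $e^{R}$ and $|e^{i\pi z/t}|$ stays bounded, the sides vanish. Evaluating $\Phi$ along $\R+i\pi/2$ via the identities above and collecting the exponential constants (completing the square contributes $e^{\pi^2/8t}$ and the factor $e^{i\pi z/t}$ contributes $e^{-\pi^2/2t}$ on $\R+i\pi/2$, which together with Yor's prefactor $e^{\pi^2/2t}$ leaves $e^{\pi^2/8t}$) would yield exactly representation \eqref{;irepr3}; the vanishing of $\mathrm{Re}\int_{\R}\Phi$, forced by the oddness of its integrand, gives as a byproduct $\ex[\cosh B_t\sin(r\sinh B_t-\tfrac{\pi}{2t}B_t)]=0$.

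To upgrade \eqref{;irepr3} to the general formula \eqref{;irepr4} it suffices to prove the single auxiliary identity $\ex[\cosh B_t\,e^{i(r\sinh B_t+\frac{\pi}{2t}B_t)}]=0$. I would obtain this by the same device applied to the entire function $\Xi(z)=e^{-z^2/2t}\cosh z\,e^{ir\sinh z}e^{i\pi z/2t}$: shifting $\R$ to $\R+i\pi/2$ is legitimate because $\mathrm{Im}\sinh z=\cosh R\sin s\ge0$ keeps $|e^{ir\sinh z}|\le1$ in the strip, and on the shifted line $\cosh(w+i\pi/2)=i\sinh w$ together with $e^{ir\sinh(w+i\pi/2)}=e^{-r\cosh w}$ makes the integrand a constant multiple of the odd function $e^{-w^2/2t}\sinh w\,e^{-r\cosh w}$, whose integral over $\R$ is zero. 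Expanding $\cos(r\sinh B_t-\nu)\cos(\tfrac{\pi}{2t}B_t-\nu)$ by the product-to-sum formula then splits it into the piece $\tfrac12\cos(r\sinh B_t-\tfrac{\pi}{2t}B_t)$, which reproduces \eqref{;irepr3}, plus the piece $\tfrac12\cos(r\sinh B_t+\tfrac{\pi}{2t}B_t-2\nu)$, whose expectation against $\cosh B_t$ vanishes for every $\nu$ by the auxiliary identity. This proves \eqref{;irepr4}, and \eqref{;irepr1}, \eqref{;irepr2}, \eqref{;irepr3} drop out as the cases $\nu=0,\pi/2,\pi/4$.

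The hard part is the justification of the contour deformation, and especially the recognition that the shift must be by exactly $i\pi/2$. The factors $e^{-r\cosh z}$ and $e^{\pm ir\sinh z}$ remain bounded only inside the strip $|\mathrm{Im}\,z|\le\pi/2$, because once $\mathrm{Re}\cosh z<0$ (equivalently $|\mathrm{Im}\,z|>\pi/2$) they blow up super-exponentially, like $e^{r\cosh R}$, which no Gaussian weight can control; thus the admissible strip has width exactly $\pi/2$, dictated by the geometry of $\cosh$. It is precisely this width that both halves the frequency $\pi/t$ of Yor's formula to $\pi/2t$ and turns the constant $e^{\pi^2/2t}$ into $e^{\pi^2/8t}$. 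Once the deformation is justified, the remainder is routine bookkeeping of exponential constants and the elementary trigonometric identities above.
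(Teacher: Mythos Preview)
Your proof is correct and takes a genuinely different, more direct route than the paper.

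The paper proceeds in two stages. First, a residue computation (\lref{;liden}) on the strip $0\le\mathrm{Im}\,z\le\pi$ applied to meromorphic integrands with poles at $\pm x+\tfrac{\pi}{2}i$ produces two identities of the form
\[
\frac{2}{\pi}\,\ex\!\left[\frac{F(B_t)\cosh B_t}{\cosh(2B_t)+\cosh(2x)}\right]=\frac{1}{\sqrt{2\pi t}}\exp\!\left(-\frac{x^2}{2t}\right)G(x).
\]
Second, an integral--transform equivalence (\lref{;lequiv}, implication \thetag{i}$\Rightarrow$\thetag{ii}) converts each such identity into one of the form $\ex[e^{-r\cosh B_t}F(B_t)]=\ex[G(B_t)\cosh B_t\cos(r\sinh B_t)]$. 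Applied to the two specific pairs $(F,G)$, this yields \eqref{;irepr1} and \eqref{;irepr2} separately (the latter after a further differentiation in $r$); \eqref{;irepr3} is their sum and \eqref{;irepr4} comes from the addition formula for cosine.

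You collapse both stages into a single contour shift of Yor's own integrand in \eqref{;irepr0d}: because your $\Phi$ and $\Xi$ are entire, no residues arise, and the shift by exactly $i\pi/2$ directly swaps $e^{-r\cosh z}\sinh z$ for $e^{-ir\sinh w}\cosh w$. This delivers \eqref{;irepr3} and the companion identity \eqref{;diff0} first, from which \eqref{;irepr4} (and hence \eqref{;irepr1}, \eqref{;irepr2}) follow by product--to--sum. Your argument is shorter and makes transparent why the half--period $\pi/2$ is forced: it is the maximal shift keeping $\mathrm{Re}\cosh z\ge 0$ on the vertical sides, hence $|e^{-r\cosh z}|\le 1$ there. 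The paper's detour, on the other hand, isolates \lref{;lequiv} as a reusable tool that it exploits again in \sref{;sfd} and the appendix, so its extra machinery pays dividends beyond \tref{;mt} itself.
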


The third representation \eqref{;irepr3} follows readily from 
\eqref{;irepr1} and \eqref{;irepr2} by summing them. 
If we multiply \eqref{;irepr1} and \eqref{;irepr2} by 
$\cos ^{2}\nu $ and $\sin ^{2}\nu $, respectively, then taking 
their sum leads to the fourth representation \eqref{;irepr4}; 
for details, see the proof of \tref{;mt} given in \sref{;prfmt}
whose reasoning will also reveal that $\nu $ may be replaced by any 
complex number.  

\begin{rem}\label{;rdiff}
\thetag{1} Taking the difference of \eqref{;irepr1} and \eqref{;irepr2} 
leads to the following fact of interest: 
\begin{align}\label{;diff0}
 \ex \!\left[ 
  \cosh B_{t}\cos \left( r\sinh B_{t}+
  \frac{\pi }{2t}B_{t}
  \right) 
  \right] =0 \quad \text{for any $r>0$ and $t>0$,}
\end{align}
which will be returned to in \rref{;rcontr} and recalled in 
the proof of \pref{;pother} below. Representation \eqref{;irepr3} 
should be considered jointly with \eqref{;diff0}. 

\noindent 
\thetag{2} Differentiating representation \eqref{;irepr4} with 
respect to $\nu $ yields 
\begin{align*}
  \ex \!\left[ 
  \cosh B_{t}\sin \left( 
  r\sinh B_{t}+\frac{\pi }{2t}B_{t}-2\nu 
  \right) 
  \right] =0,\quad r>0,\,t>0, 
\end{align*}
for any $\nu \in \R $, which agrees with \eqref{;diff0} since 
for any $r>0$ and $t>0$, 
\begin{align*}
 \ex \!\left[ 
  \cosh B_{t}\sin \left( r\sinh B_{t}+
  \frac{\pi }{2t}B_{t}
  \right) 
  \right] =0
\end{align*}
due to the fact that the function 
$
 \sin \left( r\sinh x+
 \pi x/(2t)
 \right) ,\,x\in \R 
$, is an odd function. 

\noindent 
\thetag{3} In view of \eqref{;irepr0}, we see from \eqref{;irepr1} that 
\begin{align*}
 \lim _{r\to \infty }
 \ex \!\left[ 
  \cosh B_{t}\cos (r\sinh B_{t})\cos \left( 
  \frac{\pi }{2t}B_{t}
  \right) 
  \right] =0, 
\end{align*}
which may also be explained by the Riemann--Lebesgue lemma. 
The same remark is true for \eqref{;irepr2}. 

\noindent 
\thetag{4} In a recent paper \cite{jw19} by Jakubowski and 
Wi\'sniewolski, they have obtained in their Theorem~3.7 
and Corollary~3.9 the third representation \eqref{;irepr3}, 
appealing to the fact due to 
Alili and Gruet (\cite[equation~(1.5)]{ag}) that when $t>0$, the density 
of the law of $A_{t}$ is given by 
\begin{align*}
 \frac{1}{\sqrt{2\pi t}}\int _{\R }dy\,
 \frac{\cosh y}{\sqrt{2\pi v^{3}}}
 \exp \left( 
 -\frac{\cosh ^{2}y}{2v}
 \right) \exp \biggl\{ 
 -\frac{\left( y+\sqrt{-1}\pi /2\right) ^{2}}{2t}
 \biggr\} 
\end{align*}
for $v>0$. As noted above, representation \eqref{;irepr3} is 
immediate from \eqref{;irepr1} and \eqref{;irepr2}; our method used 
in deriving those two representations quite differs from that of 
\cite{jw19} and hinges upon a simple observation exhibited in 
\lref{;lequiv} below, which explains the Riemann--Lebesgue lemma 
in a clear fashion as to some expectations relative to $B_{t}$. 
In view of the injectivity of Mellin transform, 
representation \eqref{;irepr3} may also be deduced from the 
two representations of $\al ^{(\nu, \ve )}_{t}(w)$ given just 
after Theorem~2.1 of \cite{sch} by Schr\"oder.

\noindent 
\thetag{5} In \pref{;pother}, we also present other relevant 
integral representations of $\Theta $. 
\end{rem}

\tref{;mt} has several applications. One of its immediate consequences 
is that for every fixed $t>0$, the derivative of $\Theta (r,t)$ of any order 
at $r=0+$ vanishes due to repeated occurrence of the multiple factor 
$\sin (r\sinh B_{t})$ in integrands when taking derivatives 
of \eqref{;irepr1} and \eqref{;irepr2} with respect to $r$. 

\begin{prop}\label{;pderi} Fix $t>0$. It holds that 
\begin{align}\label{;qpderi1}
 \lim _{r\to 0+}\frac{\partial ^{n}}{\partial r^{n}}\Theta (r,t)=0, \quad 
 n=0,1,2,\ldots . 
\end{align}
In particular, as $r\to 0+$, 
\begin{align}\label{;qpderi2}
 \Theta (r,t)=o(r^{\kappa }) \quad \text{for any $\kappa >0$.}
\end{align}
\end{prop}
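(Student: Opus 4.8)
The plan is to exploit the two representations \eqref{;irepr1} and \eqref{;irepr2} of \tref{;mt} simultaneously. Write, for $r\in \R $,
\[
 f(r):=\ex \!\left[ \cosh B_{t}\cos (r\sinh B_{t})\cos \!\left( \tfrac{\pi }{2t}B_{t}\right) \right] ,\quad
 g(r):=\ex \!\left[ \cosh B_{t}\sin (r\sinh B_{t})\sin \!\left( \tfrac{\pi }{2t}B_{t}\right) \right] ,
\]
so that for $r>0$ the identities \eqref{;irepr1} and \eqref{;irepr2} read $\Theta (r,t)=\tfrac{r}{\pi }e^{\pi ^{2}/(8t)}f(r)=\tfrac{r}{\pi }e^{\pi ^{2}/(8t)}g(r)$.

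First I would check that $f$ and $g$ are $C^{\infty }$ on $\R $ with derivatives obtained by differentiating under $\ex $. The $m$-th $r$-derivative of either integrand is bounded in absolute value by $\cosh B_{t}\,|\sinh B_{t}|^{m}$, \emph{uniformly in} $r$; since $B_{t}$ has finite exponential moments of every order one has $\ex [\cosh B_{t}\,|\sinh B_{t}|^{m}]<\infty $, so dominated convergence legitimizes the interchange. Using $\frac{d^{m}}{dr^{m}}\cos (rs)=s^{m}\cos \!\left( rs+\tfrac{m\pi }{2}\right) $ and the analogue for $\sin $, evaluation at $r=0$ gives
\[
 f^{(m)}(0)=\cos \!\left( \tfrac{m\pi }{2}\right) \ex \!\left[ \cosh B_{t}\,\sinh ^{m}B_{t}\cos \!\left( \tfrac{\pi }{2t}B_{t}\right) \right] ,\quad
 g^{(m)}(0)=\sin \!\left( \tfrac{m\pi }{2}\right) \ex \!\left[ \cosh B_{t}\,\sinh ^{m}B_{t}\sin \!\left( \tfrac{\pi }{2t}B_{t}\right) \right] ,
\]
the key point being that the prefactor $\cos (m\pi /2)$ (resp.\ $\sin (m\pi /2)$), evaluated at $r=0$, no longer depends on $B_{t}$ and factors out of the expectation.

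The crux is then the complementary vanishing: $\cos (m\pi /2)=0$ for odd $m$ while $\sin (m\pi /2)=0$ for even $m$. Hence $f^{(m)}(0)=0$ whenever $m$ is odd and $g^{(m)}(0)=0$ whenever $m$ is even. Since $f\equiv g$ on $(0,\infty )$ by \tref{;mt} and both functions are smooth on $\R $, the difference $f-g$ vanishes on $(0,\infty )$ together with all its derivatives at $0+$, so $f^{(m)}(0)=g^{(m)}(0)$ for every $m$. Combining the two families yields $f^{(m)}(0)=0$ for all $m=0,1,2,\dots $.

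Finally I would transfer this to $\Theta $. With $\phi :=f$, smooth on $[0,\infty )$, the factorization $\Theta (r,t)=\tfrac{r}{\pi }e^{\pi ^{2}/(8t)}\phi (r)$ and the Leibniz rule give, for $n\ge 1$,
\[
 \frac{\partial ^{n}}{\partial r^{n}}\Theta (r,t)=\frac{e^{\pi ^{2}/(8t)}}{\pi }\bigl( r\,\phi ^{(n)}(r)+n\,\phi ^{(n-1)}(r)\bigr) \xrightarrow[r\to 0+]{}\frac{n\,e^{\pi ^{2}/(8t)}}{\pi }\phi ^{(n-1)}(0)=0,
\]
while the case $n=0$ is immediate from the prefactor $r$; this is \eqref{;qpderi1}. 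For \eqref{;qpderi2}, Taylor's formula with Lagrange remainder, together with the vanishing of all derivatives at $0+$, gives $\Theta (r,t)=\tfrac{r^{n}}{n!}\partial _{r}^{n}\Theta (\xi ,t)$ for some $\xi \in (0,r)$ and every $n$; since $\partial _{r}^{n}\Theta (\cdot ,t)$ is bounded near $0$, this forces $\Theta (r,t)=O(r^{n})$ for each $n$, hence $\Theta (r,t)=o(r^{\kappa })$ for any $\kappa >0$. I expect the only genuine analytic work to be the uniform domination in the first step; the vanishing itself becomes immediate once both representations are used in tandem.
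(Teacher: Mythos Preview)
Your proof is correct and follows essentially the same approach as the paper: both arguments exploit the pair of representations \eqref{;irepr1} and \eqref{;irepr2} so that the $m$-th derivative of the auxiliary function vanishes at $r=0$ for odd $m$ from one representation and for even $m$ from the other, with the moment bound $\ex[\cosh B_{t}|\sinh B_{t}|^{m}]<\infty$ justifying differentiation under $\ex$. The only cosmetic difference is that the paper treats the two representations as formulas for a single function $g$ and chooses, for each parity, the one whose derivative carries the factor $\sin(r\sinh B_{t})$ before letting $r\to 0+$, whereas you keep $f$ and $g$ separate and invoke $f^{(m)}(0)=g^{(m)}(0)$ via smoothness and agreement on $(0,\infty)$; your explicit Taylor argument for \eqref{;qpderi2} is a welcome addition that the paper leaves implicit.
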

For the proof, see \ssref{;prfpderi}; the above fact \eqref{;qpderi1} 
can also be deduced readily from \eqref{;irepr3} and \eqref{;diff0}, 
for which we refer the reader to \rref{;rcontr}. 
As a consequence of \eqref{;qpderi2}, 
we see that the integral in \eqref{;Amlaw0} does converge even when 
$\mu \le 0$. We recall that \eqref{;qpderi1} is also observed in 
\cite[Subsection~2.1]{my2003} from Yor's formula 
\eqref{;irepr0}, combined with a remark by Stieltjes in 1894 stating 
that for any integer $n$, 
\begin{align}\label{;stieltjes}
 \int _{\R }dx\,\exp \left( 
 -\frac{x^{2}}{2t}
 \right) e^{nx}\sin \left( 
 \frac{\pi x}{t}
 \right) =0
\end{align}
(see \cite[equation~(6.3)]{my2003}). Our \tref{;mt} allows us to obtain 
\eqref{;qpderi1} more directly, without relying on Stieltjes' remark. 

\begin{rem}\label{;rstieltjes}
 Since the left-hand side of \eqref{;stieltjes} is written as 
 \begin{align*}
  \sqrt{2\pi t}\,\ex \!\left[ 
  e^{nB_{t}}\sin \left( 
  \frac{\pi }{t}B_{t}
  \right) 
  \right] , 
 \end{align*}
 \eqref{;stieltjes} is verified by the Cameron--Martin relation: 
 \begin{align*}
  e^{-n^{2}t/2}\,\ex \!\left[ 
  e^{nB_{t}}\sin \left( 
  \frac{\pi }{t}B_{t}
  \right) 
  \right] &=
  \ex \!\left[ 
  \sin \left\{ 
  \frac{\pi }{t}(B_{t}+nt)
  \right\} 
  \right] \\
  &=(-1)^{n}\ex \!\left[ 
  \sin \left( 
  \frac{\pi }{t}B_{t}
  \right) 
  \right] , 
 \end{align*}
 which is zero by the symmetry of Brownian motion. 
\end{rem}

When inserting representation \eqref{;irepr0} into 
\eqref{;Amlaw0}, a double integral emerges in the description of 
the law of $\da{\mu }_{t}$. The second application of \tref{;mt} is 
that, when $\mu $ is a nonnegative integer, we easily reduce 
that apparently complicated double integral to a single integral 
by Fubini's theorem, thanks to the well-known formulae 
(see \cite[equations~(4.11.2) and (4.11.3)]{leb}) for the Hermite 
polynomials 
\begin{align*}
 H_{2n}(x)&=\frac{(-1)^{n}2^{2n+1}e^{x^{2}}}{\sqrt{\pi }}
 \int _{0}^{\infty }ds\,s^{2n}e^{-s^{2}}\cos (2xs), \quad x\in \R , \\
 H_{2n+1}(x)&=\frac{(-1)^{n}2^{2n+2}e^{x^{2}}}{\sqrt{\pi }}
 \int _{0}^{\infty }ds\,s^{2n+1}e^{-s^{2}}\sin (2xs), \quad x\in \R , 
\end{align*}
where $n$ is any nonnegative integer. 
Dealing with other values of $\mu $ as well, we put the above-mentioned 
reduction in \pref{;preduc} below, which recovers in part 
Theorem~4.2 of \cite{duf} by Dufresne. 
For every $\mu \in \R $, we denote by $H_{\mu }$ the Hermite 
function of degree $\mu $ and recall its integral representation 
when $\mu >-1$: 
\begin{align}\label{;herf}
 H_{\mu }(x)=\frac{2^{\mu +1}e^{x^{2}}}{\sqrt{\pi }}
 \int _{0}^{\infty }ds\,s^{\mu }e^{-s^{2}}
 \cos \left( 
 2xs-\frac{\pi \mu }{2}
 \right) ,\quad x\in \R 
\end{align}
(see Section~10.2 and equation~\thetag{10.5.5} in \cite{leb} for 
the definition of the Hermite functions and the integral representation \eqref{;herf}, respectively). 

\begin{prop}\label{;preduc}
Let $\mu >-1$. For every $t>0$, the law of $\da{\mu }_{t}$ admits the density 
function expressed by 
\begin{align}
  &\frac{\pr (\da{\mu }_{t}\in dv)}{dv} \notag \\
  &=\frac{C_{\mu }(t)}{\sqrt{v^{3-\mu }}}
  \ex \!\left[ 
  \exp \left( 
  -\frac{\cosh ^{2}B_{t}}{2v}
  \right) 
  H_{\mu }\!\left( 
  \frac{\sinh B_{t}}{\sqrt{2v }}
  \right) \cosh B_{t}\cos \left\{ 
  \frac{\pi }{2}\left( \frac{B_{t}}{t}-\mu \right) 
  \right\} 
  \right] \label{;Amlaw}
\end{align}
for $v>0$, where 
$
C_{\mu }(t)=\bigl( 1/\sqrt{2^{\mu +1}\pi }\bigr) 
e^{\pi ^{2}/(8t)-\mu ^{2}t/2}
$. 
\end{prop}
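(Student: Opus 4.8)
The plan is to substitute the representation \eqref{;irepr4} of \tref{;mt}, with the specific choice $\nu=\pi\mu/2$, directly into the formula \eqref{;Amlaw0} for the density of $\da{\mu}_t$, and then to recognize the resulting $r$-integral as the integral representation \eqref{;herf} of the Hermite function $H_\mu$. The freedom in the parameter $\nu$ is exactly what produces the phase shift $\pi\mu/2$ appearing in \eqref{;herf}, and this is the whole point of having proved \tref{;mt} in the general form \eqref{;irepr4}; the choice $\nu=\pi\mu/2$ specializes to \eqref{;irepr1} for even integer $\mu$ and to \eqref{;irepr2} for odd integer $\mu$, so it unifies the two Hermite-polynomial reductions mentioned above and extends them to all real $\mu>-1$.

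First I would observe that the prefactor $r$ in \eqref{;irepr4} upgrades the weight $r^{\mu-1}$ of \eqref{;Amlaw0} to $r^{\mu}$, which is precisely the power of $s$ occurring under the integral sign in \eqref{;herf}. After this substitution the density takes the form of an $r$-integral over $(0,\infty)$ against $r^{\mu}e^{-vr^2/2}$, nested inside a Brownian expectation carrying the factor $\cosh B_t\cos\bigl(\pi B_t/(2t)-\pi\mu/2\bigr)$.

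The key computational step is to interchange the $r$-integral with the expectation by Fubini's theorem and then to evaluate the inner integral $\int_0^\infty dr\,r^{\mu}e^{-vr^2/2}\cos(r\sinh B_t-\pi\mu/2)$. The substitution $s=r\sqrt{v/2}$ recasts it as $(2/v)^{(\mu+1)/2}\int_0^\infty ds\,s^{\mu}e^{-s^2}\cos(2sx-\pi\mu/2)$ with $x=\sinh B_t/\sqrt{2v}$, which by \eqref{;herf} equals $(2/v)^{(\mu+1)/2}\sqrt\pi\,2^{-(\mu+1)}e^{-x^2}H_\mu(x)$. Substituting this back, I would use $1+\sinh^2 B_t=\cosh^2 B_t$ to merge the two Gaussian factors $e^{-1/(2v)}$ and $e^{-\sinh^2 B_t/(2v)}$ into $e^{-\cosh^2 B_t/(2v)}$, and collect the powers of $v$, which combine to $v^{(\mu-3)/2}=1/\sqrt{v^{3-\mu}}$, together with the remaining constants and $t$-dependent exponentials, which assemble into $C_\mu(t)$. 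This reproduces \eqref{;Amlaw} exactly.

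The one point demanding care is the justification of Fubini's theorem, and this is where the hypothesis $\mu>-1$ enters. The absolute value of the inner $r$-integral is at most $\int_0^\infty r^{\mu}e^{-vr^2/2}\,dr=\tfrac12(2/v)^{(\mu+1)/2}\Gamma\bigl((\mu+1)/2\bigr)$, which is finite exactly when $\mu>-1$, while the accompanying factor obeys $|\cosh B_t\cos(\cdots)|\le\cosh B_t$ with $\ex[\cosh B_t]=e^{t/2}<\infty$. Hence the full integrand is absolutely integrable with respect to the product of Lebesgue measure on $(0,\infty)$ and Wiener measure, so the interchange is legitimate. I expect no other obstacle: once Fubini is in place, the remainder is the bookkeeping of constants and powers of $v$ described above.
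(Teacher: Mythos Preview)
Your proposal is correct and follows essentially the same route as the paper's own proof: insert \eqref{;irepr4} with $\nu=\pi\mu/2$ into \eqref{;Amlaw0}, swap the $r$-integral and the expectation via Fubini (this is where $\mu>-1$ is used), evaluate the resulting $r$-integral by the substitution $r=\sqrt{2/v}\,s$ and the formula \eqref{;herf}, and then tidy the constants and merge $e^{-1/(2v)}e^{-\sinh^2 B_t/(2v)}$ into $e^{-\cosh^2 B_t/(2v)}$. Your Fubini justification is in fact more explicit than the paper's, which simply asserts that Fubini applies for $\mu>-1$.
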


Recall that 
\begin{align*}
 H_{0}(x)=1, \quad H_{1}(x)=2x, \quad x\in \R 
\end{align*}
(see, e.g., \cite[p.~60]{leb}). Expression \eqref{;Amlaw} 
in the case $\mu =0$ was obtained by several 
authors, for which we refer the reader to \cite[p.~223]{duf} as well as 
the beginning of \cite[Subsection~2.2]{my2003} (see also 
\rref{;rdiff}\thetag{4} above); an alternative derivation 
of \eqref{;Amlaw} in the case $\mu =0$ and $\mu =1$ 
without relying on \tref{;mt}, will be found in \sref{;sfd} (see 
equations~\eqref{;A0law} and \eqref{;A1law} therein). 
If we consider the law of $1/(2\da{\mu }_{t})$, 
then from \eqref{;Amlaw}, we partly recover formula~\thetag{4.9} in 
\cite[Theorem~4.2]{duf} due to Dufresne, who also shows that the formula is 
valid for $\mu \le -1$ as well, by developing a recurrence relation 
that connects the law of $1/(2\da{\nu }_{t})$ with that of 
$1/(2\da{\mu }_{t})$ when $\nu <\mu $. We do not pursue it here with  
generality, however, if we repeat integration by 
parts as necessary appealing to \eqref{;qpderi1}, 
then \tref{;mt} enables us to reduce the computation of the 
case $\mu \le -1$ to a situation where formula \eqref{;herf} 
applies or the function $H_{-1}$ emerges; see \rref{;ribp}. 
We also note that in \cite{sch}, a contour integral representation 
for the density of the law of $\da{\mu }_{t}$, is given 
in terms of Hermite functions and compared with 
Dufresne's representation. 

We give an outline of the paper. 
In \sref{;prfmt}, we prove \tref{;mt}; we do this by 
preparing \lref{;lequiv} which shows the equivalence of certain 
three relations for expectations relative to Brownian motion. 
Since in the proof of \tref{;mt}, one implication between 
two of the three relations is used, we give its proof in 
\sref{;prfmt} and the rest of the proof of the lemma is 
provided in the appendix. \psref{;pderi} and \ref{;preduc} are 
proven in \sref{;prfprops}. We prove in \sref{;sfd} a family of integral 
identities that embraces relations in \tref{;mt}, which is then 
applied to the derivation of other integral representations 
of $\Theta $ relevant to \tref{;mt}. 
Finally, in the appendix, we complete the proof of \lref{;lequiv}. 

%%%%%% New Section %%%%%%
\section{Proof of \tref{;mt}}\label{;prfmt}
From now on, we fix $t>0$. This section is devoted to the proof of \tref{;mt}. 
Let two real-valued functions $\f $ and $\g $ on $\R $ be continuous 
for simplicity and suppose that they are even functions and satisfy 
\begin{align}\label{;intcond}
 \ex \!\left[ 
 |\f (B_{t})|
 \right] <\infty  && \text{and} &&  
 \ex \!\left[ 
 |\g (B_{t})|\cosh B_{t}
 \right] <\infty . 
\end{align}

\begin{lem}\label{;lequiv}
Under condition \eqref{;intcond}, the following three relations 
are equivalent: 
\begin{align}
  \thetag{i}\ &\frac{2}{\pi }\ex \!\left[ 
  \frac{\f (B_{t})\cosh B_{t}}{\cosh (2B_{t})+\cosh (2x)}
  \right] 
  =\frac{1}{\sqrt{2\pi t}}\exp \left( 
  -\frac{x^{2}}{2t}
  \right) \g (x) \quad \text{for any }x\in \R ; \label{;rel3}\\
  \thetag{ii}\ &\ex \!\left[ 
  e^{-r\cosh B_{t}}\f (B_{t})
  \right] 
  =\ex \!\left[ 
  \g (B_{t})\cosh B_{t}\cos (r\sinh B_{t})
  \right] \quad \text{for any }r\ge 0; \label{;rel2}\\
  \thetag{iii}\ &\ex \!\left[ 
  \frac{\f (B_{t})}{\cosh B_{t}+\cosh x}
  \right] 
  =\ex \!\left[ 
  \frac{\g (B_{t})}{\cosh (x+B_{t})}
  \right] \quad \text{for any }x\in \R .  \label{;rel1}
\end{align}
\end{lem}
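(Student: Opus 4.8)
The plan is to recast each of \eqref{;rel3}, \eqref{;rel2}, \eqref{;rel1} as a statement about an integral transform of the two bounded continuous functions
\[\phi(r):=\ex[\f(B_{t})e^{-r\cosh B_{t}}],\qquad \psi(r):=\ex[\g(B_{t})\cosh B_{t}\cos(r\sinh B_{t})],\quad r\ge 0,\]
(boundedness and continuity following from \eqref{;intcond}), and then to invoke uniqueness or injectivity of the transform at hand. Writing $\rho_{t}(y)=(2\pi t)^{-1/2}e^{-y^{2}/(2t)}$ for the density of $B_{t}$, the manipulations rely only on the elementary evaluations $\int_{0}^{\infty}e^{-pr}\cos(qr)\,dr=p/(p^{2}+q^{2})$ $(p>0)$ and $\int_{\R}\cos(ru)/(a^{2}+u^{2})\,du=(\pi/a)e^{-ar}$ $(a>0,\,r\ge 0)$, together with the addition formula $\cosh 2x+\cosh 2y=2\cosh(x+y)\cosh(x-y)$. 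In particular, the substitution $u=\sinh x$, under which $\cosh 2y+\cosh 2x=2(\cosh^{2}y+u^{2})$, yields the one computation used throughout,
\[\int_{\R}\frac{\cosh x\,\cos(r\sinh x)}{\cosh 2y+\cosh 2x}\,dx=\frac{\pi}{2\cosh y}\,e^{-r\cosh y};\]
its value at $r=0$ is the estimate $\int_{\R}\cosh x/(\cosh 2y+\cosh 2x)\,dx=\pi/(2\cosh y)$ that settles every integrability check below.

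I would single out \thetag{i}$\Rightarrow$\thetag{ii} as the implication to establish in the present section, deferring the others to the appendix as the paper announces. Rewriting \eqref{;rel3} as $\rho_{t}(x)\g(x)=(2/\pi)\ex[\f(B_{t})\cosh B_{t}/(\cosh 2B_{t}+\cosh 2x)]$ and substituting it into $\psi(r)=\int_{\R}\rho_{t}(x)\g(x)\cosh x\,\cos(r\sinh x)\,dx$, Fubini's theorem (licensed by the estimate above) and the displayed computation collapse the $x$-integral to leave exactly $\psi(r)=\ex[\f(B_{t})e^{-r\cosh B_{t}}]=\phi(r)$, which is \eqref{;rel2}. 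For \thetag{ii}$\Rightarrow$\thetag{iii} I would integrate the identity $\phi=\psi$ against $e^{-r\cosh x}\,dr$ over $(0,\infty)$: the left side becomes $\ex[\f(B_{t})/(\cosh x+\cosh B_{t})]$, while the first elementary evaluation turns the right side into $\ex[\g(B_{t})\cosh B_{t}\cdot 2\cosh x/(\cosh 2x+\cosh 2B_{t})]$; the addition formula and the partial fraction $2\cosh x\cosh y/(\cosh 2x+\cosh 2y)=\{1/\cosh(x-y)+1/\cosh(x+y)\}/2$, followed by $B_{t}\eqd -B_{t}$ and the evenness of $\g$, reduce this to $\ex[\g(B_{t})/\cosh(x+B_{t})]$, i.e.\ \eqref{;rel1}.

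The two backward directions I would handle by uniqueness rather than explicit inversion. The computation just made shows that \eqref{;rel1} is precisely the equality $\int_{0}^{\infty}e^{-r\cosh x}\phi(r)\,dr=\int_{0}^{\infty}e^{-r\cosh x}\psi(r)\,dr$ for all $x\in\R$; as $\cosh x$ sweeps $[1,\infty)$, the Laplace transforms of the bounded continuous $\phi$ and $\psi$ agree there, hence (being analytic on $\{\mathrm{Re}\,s>0\}$) everywhere, so $\phi\equiv\psi$ and \thetag{iii}$\Rightarrow$\thetag{ii} follows. For \thetag{ii}$\Rightarrow$\thetag{i} I would manufacture the target directly: put $\tilde{\g}(x):=(2/\pi)\rho_{t}(x)^{-1}\ex[\f(B_{t})\cosh B_{t}/(\cosh 2B_{t}+\cosh 2x)]$, which is even and continuous and, by the estimate above, obeys $\ex[|\tilde{\g}(B_{t})|\cosh B_{t}]\le\ex[|\f(B_{t})|]<\infty$. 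By construction $(\f,\tilde{\g})$ satisfies \eqref{;rel3}, so the already-proved \thetag{i}$\Rightarrow$\thetag{ii} gives $\phi(r)=\ex[\tilde{\g}(B_{t})\cosh B_{t}\cos(r\sinh B_{t})]$; subtracting the assumed \eqref{;rel2} leaves $\ex[(\tilde{\g}-\g)(B_{t})\cosh B_{t}\cos(r\sinh B_{t})]=0$ for every $r\ge 0$. After $u=\sinh y$ this asserts that the Fourier cosine transform of the even $L^{1}(\R)$ function $v\mapsto\rho_{t}(\argsh v)(\tilde{\g}-\g)(\argsh v)$ vanishes identically, whence $\tilde{\g}=\g$ by injectivity of the Fourier transform; this is exactly \eqref{;rel3}.

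The main obstacle is this last step. Taken at face value, recovering $\g$ from \eqref{;rel2} means inverting $\psi(r)=\int_{\R}h(v)\cos(rv)\,dv$ for $h$, and since $\psi$ need not be integrable the inversion integral converges only conditionally, so a direct pointwise inversion is delicate. Building $\tilde{\g}$ out of $\f$ and then reusing \thetag{i}$\Rightarrow$\thetag{ii} trades this for the injectivity of the Fourier transform on $L^{1}$, which is unconditional. Everything else is bookkeeping: each Fubini interchange and the verification that $\tilde{\g}$ meets the condition \eqref{;intcond} are all controlled by the single estimate $\int_{\R}\cosh x/(\cosh 2y+\cosh 2x)\,dx=\pi/(2\cosh y)$.
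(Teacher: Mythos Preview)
Your forward directions \thetag{i}$\Rightarrow$\thetag{ii} and \thetag{ii}$\Rightarrow$\thetag{iii} match the paper's arguments essentially verbatim: the same key integral $\int_{\R}\frac{\cosh x\cos(r\sinh x)}{\cosh 2y+\cosh 2x}\,dx=\frac{\pi}{2\cosh y}e^{-r\cosh y}$, the same Fubini justification via its value at $r=0$, and the same use of the partial-fraction decomposition together with the evenness of $G$.

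Where you diverge is in closing the equivalence. The paper proves the single implication \thetag{iii}$\Rightarrow$\thetag{i} directly: it multiplies \eqref{;rel1} by $\cos(\xi x)$, integrates over $x$, and on the left uses an auxiliary identity (their \lref{;lelem}),
\[
\int_{\R}\frac{dy}{\cosh(x+y)}\,\frac{1}{\cosh 2b+\cosh 2y}=\frac{\pi}{2\cosh b\,(\cosh b+\cosh x)},
\]
to reveal that both sides are the Fourier transform $\pi/\cosh(\pi\xi/2)$ times the two sides of \eqref{;rel3}; injectivity of the Fourier transform then finishes. You instead split the return trip into \thetag{iii}$\Rightarrow$\thetag{ii} and \thetag{ii}$\Rightarrow$\thetag{i}. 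For the first you observe that \eqref{;rel1} is literally the equality of the Laplace transforms of the bounded continuous functions $\phi$ and $\psi$ on $[1,\infty)$, extend by analyticity, and invoke Lerch's uniqueness theorem. For the second you manufacture $\tilde G$ so that $(F,\tilde G)$ satisfies \eqref{;rel3}, reuse \thetag{i}$\Rightarrow$\thetag{ii}, and kill $\tilde G-G$ by Fourier injectivity on $L^{1}$ after the substitution $u=\sinh y$. Both steps are correct as written; the $L^{1}$ bound for $\tilde G$ and the Jacobian cancellation you indicate are exactly right.

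The trade-off: your route is more self-contained in that it eliminates the need for the auxiliary double-integral lemma, at the cost of appealing to analytic continuation of the Laplace transform and of introducing the auxiliary object $\tilde G$. The paper's route keeps to a clean three-step cycle with no auxiliary construction, but pays for it with \lref{;lelem}. Either way, the argument ultimately rests on injectivity of an integral transform; you use Laplace and Fourier, the paper uses Fourier alone.
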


Since, in the proof of \tref{;mt}, we only use the implication 
from \thetag{i} to \thetag{ii}, we give a proof of it 
below and postpone proofs of other implications to 
the appendix. 

\begin{proof}[Proof of \thetag{i} $\Rightarrow $ \thetag{ii} in \lref{;lequiv}]
For every $r\ge 0$, we integrate both sides of \eqref{;rel3} multiplied by 
$\cosh x\cos (r\sinh x)$ with respect to $x\in \R$. 
Then the right-hand side turns into that of \eqref{;rel2} in view of the 
latter condition in \eqref{;intcond}. On the other hand, provided that we are 
allowed to use Fubini's theorem, the left-hand side turns into 
 \begin{align*}
 \frac{2}{\pi }\ex \!\left[ 
 F(B_{t})\cosh B_{t}\int _{\R }dx\,
 \frac{\cosh x\cos (r\sinh x)}{\cosh (2B_{t})+\cosh (2x)}
 \right] =\ex \!\left[ 
 e^{-r\cosh B_{t}}\f (B_{t})
 \right] 
\end{align*}
as claimed. Here for the equality, we used the fact that 
for any $b\in \R $ and $r\ge 0$, 
\begin{align}\label{;fact1}
 \int _{\R }dx\,
 \frac{\cosh x\cos (r\sinh x)}{\cosh (2b)+\cosh (2x)}
 =\frac{\pi }{2\cosh b}e^{-r\cosh b}, 
\end{align}
which follows readily by noting 
$
 \cosh (2b)+\cosh (2x)
 =2\left( \cosh ^{2}b+\sinh ^{2}x\right) 
$, 
and changing the variables 
with $\sinh x=y\cosh b,\,y\in \R $. Usage of Fubini's theorem 
mentioned above is justified by taking $r=0$ in \eqref{;fact1}; 
indeed, 
\begin{align}
 \frac{2}{\pi }\int _{\R }dx\,
 \cosh x\,\ex \!\left[ 
 \frac{|F(B_{t})|\cosh B_{t}}{\cosh (2B_{t})+\cosh (2x)}
 \right] =\ex \!\left[ 
 |F(B_{t})|
 \right] , \label{;fubi1}
\end{align}
which is assumed to be finite in \eqref{;intcond}. The 
proof is complete. 
\end{proof}

In what follows, we denote by $\C $ the complex plane and write 
$i=\sqrt{-1}$. A pair of functions $F$ and $G$ fulfilling relation 
\eqref{;rel3} may be obtained by the residue theorem applied to a 
meromorphic function $f$ of the form 
\begin{align*}
 f(z)=\frac{J(z)}{\cosh (2z)+\cosh (2x)}\exp \left( 
 -\frac{z^{2}}{2t}
 \right) ,\quad z\in \C , 
\end{align*}
where $x\in \R $ and $J(z),\,z\in \C$, is an odd entire function 
which will be taken to be either $\sinh (2z)$ or $\sinh z$ below. 
When $x\neq 0$, the poles $w$ of $f$ each of whose imaginary 
part $\mathrm{Im}\,w$ lies between $0$ and $\pi $, are two points 
$\pm x+ (\pi /2)i$. By taking a rectangular contour circling 
these poles and having its two sides on the two lines 
$\mathrm{Im}\,z=0$ and $\mathrm{Im}\,z=\pi $, 
residue calculus yields, at least heuristically, 
\begin{align*}
 &\frac{1}{2\pi i}
 \int _{\R }\frac{d\xi }{\cosh (2\xi )+\cosh (2x)}
 \exp \left( 
 -\frac{\xi ^{2}}{2t}+\frac{\pi ^{2}}{2t}
 \right) J(\xi +\pi i)\exp \left( 
 -\frac{\pi \xi }{t}i
 \right) \\
 &=\frac{1}{2\sinh (2x)}\exp \left( 
 -\frac{x^{2}}{2t}+\frac{\pi ^{2}}{8t}
 \right) \left\{ 
 J(x+\pi i/2)\exp \left( 
 -\frac{\pi x}{2t}i
 \right) +J(x-\pi i/2)\exp \left( 
 \frac{\pi x}{2t}i
 \right) 
 \right\} 
\end{align*}
for $x\neq 0$. When $J(z)=\sinh (2z)$ and $\sinh z$, the above 
computation is justified, yielding the following lemma: 

\begin{lem}\label{;liden}
 It holds that for any $x\in \R $, 
 \begin{align}
  \frac{1}{\pi }\ex \!\left[ 
  \frac{
  \sinh B_{t}\cosh B_{t}\sin \left( \pi B_{t}/t\right) 
  }
  {
  \cosh (2B_{t})+\cosh (2x)
  }
  \right] 
  &=\frac{1}{\sqrt{2\pi t}}
  \exp \left( 
  -\frac{x^{2}}{2t}-\frac{3\pi ^{2}}{8t}
  \right) \cos \left( 
  \frac{\pi x}{2t}
  \right) , \label{;iden1}\\
  \frac{1}{\pi }\ex \!\left[ 
  \frac{
  \sinh B_{t}\sin \left( \pi B_{t}/t\right) 
  }
  {
  \cosh (2B_{t})+\cosh (2x)
  }
  \right] 
  &=\frac{1}{\sqrt{2\pi t}}
  \exp \left( 
  -\frac{x^{2}}{2t}-\frac{3\pi ^{2}}{8t}
  \right) S(x), \label{;iden2}
 \end{align}
 where in the latter identity, the function $S(x)\equiv S(x,t),\,x\in \R $, 
 is given by 
 \begin{align*}
  S(x)=
  \begin{cases}
   \sin (\frac{\pi }{2t}x)/\sinh x & \text{for $x\neq 0$}, \\
   \pi /(2t) & \text{for $x=0$}. 
  \end{cases}
 \end{align*}
\end{lem}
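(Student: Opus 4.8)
The plan is to make rigorous the heuristic residue computation sketched just above the statement, carrying it out for the two choices $J(z)=\sinh(2z)$ and $J(z)=\sinh z$ in parallel. Fix $x\in\R$ with $x\neq 0$ for the moment, and integrate $f$ around the rectangular contour $\Gamma_{R}$ with vertices $-R,\,R,\,R+\pi i,\,-R+\pi i$, traversed counterclockwise. Since $\cosh$ and $\sinh$ have period $2\pi i$, the zeros of $\cosh(2z)+\cosh(2x)$ in the open strip $0<\mathrm{Im}\,z<\pi$ reduce to the two simple poles $w_{\pm}=\pm x+(\pi/2)i$ of $f$, at which $\sinh(2w_{\pm})=\mp\sinh(2x)$; no zero lies on the horizontal sides $\mathrm{Im}\,z=0,\pi$. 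Hence the residue theorem gives $\oint_{\Gamma_{R}}f=2\pi i\{\mathrm{Res}_{w_{+}}f+\mathrm{Res}_{w_{-}}f\}$.

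Next I would let $R\to\infty$. On the vertical sides $z=\pm R+i\eta$, $0\le\eta\le\pi$, the Gaussian factor obeys $|\exp(-z^{2}/(2t))|=\exp(-(R^{2}-\eta^{2})/(2t))\le\exp(-(R^{2}-\pi^{2})/(2t))$, while $J(z)/(\cosh(2z)+\cosh(2x))$ remains bounded uniformly in $\eta$ for all large $R$ (its modulus is controlled by $|\cosh(2z)|\to\infty$); so both vertical contributions vanish in the limit. Using the periodicity $\cosh(2(\xi+\pi i))=\cosh(2\xi)$ together with $\sinh(2(\xi+\pi i))=\sinh(2\xi)$ and $\sinh(\xi+\pi i)=-\sinh\xi$, the bottom integral $\int_{\R}f(\xi)\,d\xi$ vanishes by oddness of its integrand, leaving
\begin{align*}
 -\int_{\R}f(\xi+\pi i)\,d\xi=2\pi i\{\mathrm{Res}_{w_{+}}f+\mathrm{Res}_{w_{-}}f\}.
\end{align*}
Writing $\exp(-\pi\xi i/t)=\cos(\pi\xi/t)-i\sin(\pi\xi/t)$ and noting that the factor of $f(\xi+\pi i)$ multiplying $\exp(-\pi\xi i/t)$ is real-valued and odd in $\xi$, only the $\sin(\pi\xi/t)$-term survives the integration; this is precisely what converts the left-hand side into a constant multiple of the Brownian expectation in \eqref{;iden1} or \eqref{;iden2}, through $\ex[g(B_{t})]=(2\pi t)^{-1/2}\int_{\R}g(\xi)\exp(-\xi^{2}/(2t))\,d\xi$.

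It then remains to evaluate the residues. Each equals $J(w_{\pm})\exp(-w_{\pm}^{2}/(2t))/(2\sinh(2w_{\pm}))$, with $\exp(-w_{\pm}^{2}/(2t))=\exp(-x^{2}/(2t)+\pi^{2}/(8t))\exp(\mp\pi x i/(2t))$. For $J=\sinh(2z)$ one has $J(w_{\pm})=\mp\sinh(2x)$, so the prefactors reduce to $1/2$ and the two residues combine through $\exp(-\pi xi/(2t))+\exp(\pi xi/(2t))$ into a cosine, yielding \eqref{;iden1}. For $J=\sinh z$ one has $J(w_{\pm})=i\cosh x$, and after using $\sinh(2x)=2\sinh x\cosh x$ the two residues combine through $\exp(\pi xi/(2t))-\exp(-\pi xi/(2t))$ into a sine, producing the factor $\sin(\pi x/(2t))/\sinh x=S(x)$ and hence \eqref{;iden2}. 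In both cases the final step is to divide through by $i$ and by $\pi$ and to recognise the resulting Gaussian-weighted integral as the asserted expectation.

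The \emph{main obstacle} is the rigorous justification of the contour argument rather than the algebra: beyond the decay estimate on the vertical sides noted above, one must deal with the coalescence of $w_{+}$ and $w_{-}$ into a double pole at $z=(\pi/2)i$ when $x=0$. I would avoid a separate double-residue computation by treating $x=0$ through continuity: both sides of \eqref{;iden1} and \eqref{;iden2} are continuous in $x$ — the left-hand sides by dominated convergence, since $\cosh(2\xi)+\cosh(2x)\ge 2$ bounds the integrands uniformly on compact $x$-ranges, and the right-hand sides because $S$ was defined so that $S(x)\to\pi/(2t)$ as $x\to 0$ — so the identities at $x=0$ follow by letting $x\to 0$ in the case $x\neq 0$.
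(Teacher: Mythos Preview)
Your proposal is correct and follows essentially the same route as the paper: the same meromorphic $f$, the same rectangular contour between $\mathrm{Im}\,z=0$ and $\mathrm{Im}\,z=\pi$, the same two simple poles at $\pm x+(\pi/2)i$, the Gaussian decay to kill the vertical sides, and the same continuity argument for $x=0$. The paper leaves all of this at the level of ``the above computation is justified'' and ``validity at $x=0$ follows by passing to the limit''; you have simply supplied the details (vanishing of the bottom integral by oddness, the uniform bound $\cosh(2\xi)+\cosh(2x)\ge 2$ for dominated convergence, and the explicit residue algebra), so there is nothing materially different between the two.
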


Validity of \eqref{;iden1} and \eqref{;iden2} at $x=0$ follows by passing 
to the limit as $x\to 0$. We remark that these two identities are found 
in Lemmas~3.1 and 3.2 of \cite{my2003} by Matsumoto--Yor, in which 
paper those two lemmas are used to show that, in the case 
$\mu =0$ and $1$, expression \eqref{;Amlaw0} 
with Yor's formula \eqref{;irepr0} inserted in coincides with 
\eqref{;Amlaw}. 
What is revealed in the present paper is that coincidence for any 
$\mu >-1$ is also reduced to the above two identities \eqref{;iden1} 
and \eqref{;iden2} via \tref{;mt}. 

We are in a position to prove the theorem. 

\begin{proof}[Proof of \tref{;mt}]
 First we prove \eqref{;irepr1}. Identity \eqref{;iden1} tells us that 
 we may take 
 \begin{align*}
  \f (x)=\frac{1}{2}\sinh x\sin \left( 
  \frac{\pi x}{t}
  \right)  && \text{and} && 
  \g (x)=\exp \left( -\frac{3\pi ^{2}}{8t}\right) 
  \cos \left( 
  \frac{\pi x}{2t}
  \right) 
 \end{align*}
 in relation \eqref{;rel3}. It is clear that these functions 
 fulfill the integrability condition \eqref{;intcond}. Therefore 
 by \lref{;lequiv}, we have for every $r\ge 0$, 
 \begin{align*}
  \frac{1}{2}\ex \!\left[ 
  e^{-r\cosh B_{t}}\sinh B_{t}\sin \left( 
  \frac{\pi }{t}B_{t}
  \right) 
  \right] 
  =\exp \left( -\frac{3\pi ^{2}}{8t}\right) \ex \!\left[ 
  \cosh B_{t}\cos (r\sinh B_{t})\cos \left( 
  \frac{\pi }{2t}B_{t}
  \right) 
  \right] . 
 \end{align*}
 Now representation \eqref{;irepr1} follows from this and Yor's 
 formula \eqref{;irepr0d}. 

 We proceed to the proof of \eqref{;irepr2}. The second identity 
 \eqref{;iden2} in \lref{;liden} shows that we may take 
 in \eqref{;rel3} 
 \begin{align*}
  \f (x)=\frac{1}{2}\frac{\sinh x}{\cosh x}\sin \left( 
  \frac{\pi x}{t}
  \right) && \text{and} && 
  \g (x)=\exp \left( -\frac{3\pi ^{2}}{8t}\right) 
  S(x), 
 \end{align*}
 which pair also fulfills \eqref{;intcond}. Therefore by 
 \lref{;lequiv}, we have for every $r\ge 0$, 
 \begin{align*}
  \frac{1}{2}\ex \!\left[ 
  e^{-r\cosh B_{t}}\frac{\sinh B_{t}}{\cosh B_{t}}\sin \left( 
  \frac{\pi }{t}B_{t}
  \right) 
  \right] 
  =\exp \left( -\frac{3\pi ^{2}}{8t}\right) \ex \!\left[ 
  \cosh B_{t}\cos (r\sinh B_{t})S(B_{t})
  \right] . 
 \end{align*}
 Thanks to the fact that 
 \begin{align}\label{;scfinite}
  \ex \!\left[ |\sinh B_{t}|\right] 
  \le \ex \!\left[ \cosh B_{t}\right] <\infty , 
 \end{align}
 differentiating both sides of the last identity with respect to $r$ and  
 appealing to formula \eqref{;irepr0d} again, we arrive at \eqref{;irepr2}. 
 
 Representation \eqref{;irepr3} is a consequence of summation of 
 \eqref{;irepr1} and \eqref{;irepr2}. To prove \eqref{;irepr4}, 
 fix $\nu \in \R $. Using the addition theorem, we develop 
 \begin{align*}
  &\cos (r\sinh B_{t}-\nu )\cos \left( 
  \frac{\pi }{2t}B_{t}-\nu 
  \right) \\
  &=\cos (r\sinh B_{t})\cos \left( 
  \frac{\pi }{2t}B_{t}
  \right) \cos ^{2}\nu 
  +\sin (r\sinh B_{t})\sin \left( 
  \frac{\pi }{2t}B_{t}
  \right) \sin ^{2}\nu \\
  &\quad +R(B_{t})\sin \nu \cos \nu 
 \end{align*}
 with $R(x),\,x\in \R $, an odd function such that 
 $
 \ex \left[ 
 |R(B_{t})|\cosh B_{t}
 \right] <\infty 
 $. 
 Hence the right-hand side 
 of the claimed identity \eqref{;irepr4} is equal to 
 \begin{align*}
  &\frac{r}{\pi }\exp \left( 
  \frac{\pi ^{2}}{8t}
  \right) \Bigl\{ 
  \ex \!\left[ 
  \cosh B_{t}\cos (r\sinh B_{t})\cos \left( 
  \frac{\pi }{2t}B_{t}
  \right) 
  \right] \cos ^{2}\nu \\
  &\qquad \qquad \qquad +
  \ex \!\left[ 
  \cosh B_{t}\sin (r\sinh B_{t})\sin \left( 
  \frac{\pi }{2t}B_{t}
  \right) 
  \right] \sin ^{2}\nu 
  \Bigr\} \\
  &=\Theta (r,t)\!\left( \cos ^{2}\nu +\sin ^{2}\nu \right) 
 \end{align*}
 by \eqref{;irepr1} and \eqref{;irepr2}, which shows \eqref{;irepr4} 
 and completes the proof of the theorem. 
\end{proof}

%%%%%% New Section %%%%%%
\section{Proofs of \psref{;pderi} and \ref{;preduc}}\label{;prfprops} 

In the sequel, we set the function $g(r)\equiv g(r,t),\,r>0$, by 
\begin{align*}
 g(r):=\ex \!\left[ 
  \cosh B_{t}\cos (r\sinh B_{t})\cos \left( 
  \frac{\pi }{2t}B_{t}
  \right) \right] . 
\end{align*}
As seen in the previous section, it holds that for any $r>0$, 
\begin{align}
 &g(r)=\ex \!\left[ 
 \cosh B_{t}\sin (r\sinh B_{t})\sin \left( 
 \frac{\pi }{2t}B_{t}
 \right) \right] \notag  
\intertext{and} 
 &\qquad \ \ \quad \Theta (r,t)=\frac{r}{\pi }\exp \left( 
 \frac{\pi ^{2}}{8t}
 \right) g(r) . \label{;Theta}
\end{align}

\subsection{Proof of \pref{;pderi}}\label{;prfpderi}

In this subsection, we prove \pref{;pderi}. 

\begin{proof}[Proof of \pref{;pderi}]
 By relation \eqref{;Theta}, it suffices to show that 
 \begin{align}\label{;deri0d}
  \lim _{r\to 0+}g^{(n)}(r)=0,\quad n=0,1,2,\ldots . 
 \end{align}
 By observing the fact that 
 \begin{align}\label{;finite}
  \ex \!\left[ 
  \cosh B_{t}\left| \sinh B_{t}\right| ^{n}
  \right] <\infty 
 \end{align}
 for any nonnegative integer $n$, successive differentiation 
 of the above two representations of $g$ yields 
 \begin{align*}
  g^{(2n)}(r)&=(-1)^{n}
  \ex \!\left[ 
  \cosh B_{t}\sinh ^{2n}\!B_{t}\sin (r\sinh B_{t})\sin \left( 
  \frac{\pi }{2t}B_{t}
  \right) \right] , \\
  g^{(2n+1)}(r)&=(-1)^{n+1}
  \ex \!\left[ 
  \cosh B_{t}\sinh ^{2n+1}\!B_{t}\sin (r\sinh B_{t})\cos \left( 
  \frac{\pi }{2t}B_{t}
  \right) \right] 
 \end{align*}
 for every nonnegative integer $n$, from which \eqref{;deri0d} 
 follows readily by the dominated convergence theorem. 
\end{proof}

\begin{rem}\label{;rcontr}
 Another direct way of convincing ourselves of \eqref{;deri0d} is to 
 consider the mapping 
 \begin{align}\label{;map}
  \R \ni r\mapsto 
  \ex \!\left[ 
  \cosh B_{t}\cos \left( 
  \frac{\pi }{2t}B_{t}-r\sinh B_{t}
  \right) 
  \right] , 
 \end{align}
 which determines a $C^{\infty }$-function 
 thanks to \eqref{;finite}, agrees with $2g(r)$ for $r>0$, 
 and is identically zero on $(-\infty ,0)$ in view of \eqref{;diff0}, 
 and hence would yield a contradiction if \eqref{;deri0d} were not 
 the case. A fact of independent interest following from the 
 above argument is that the function given by \eqref{;map} 
 provides an example of a $C^{\infty }$-function on $\R $ that is 
 not analytic at $r=0$;  in fact,  
 \begin{align*}
  \ex \!\left[ 
  \cosh B_{t}\cos \left( 
  \frac{\pi }{2t}B_{t}-r\sinh B_{t}
  \right) 
  \right] >0 
 \end{align*}
 whenever $r>0$, since the left-hand side agrees with 
 \begin{align}\label{;rmap}
  \sqrt{\frac{2\pi }{t}}\exp \left( r-\frac{\pi ^{2}}{8t}\right) 
  \frac{\displaystyle P\!\left( 1/A_{t}\in dr \mid B_{t}=0\right) }{dr}
 \end{align}
 and, under the pinned measure $P(\,\cdot \mid B_{t}=0)$, 
 $\mathrm{ess\,sup}\,A_{t}=\infty $ and $\mathrm{ess\,inf}\,A_{t}=0$. 
 Expression \eqref{;rmap} is a consequence of \eqref{;jl1} 
 together with \eqref{;irepr3}.
\end{rem}

\subsection{Proof of \pref{;preduc}}\label{;prfpreduc}

In this subsection, we prove \pref{;preduc}. 

\begin{proof}[Proof of \pref{;preduc}]
 We insert representation \eqref{;irepr4} into \eqref{;Amlaw0} 
 putting $\nu =\pi \mu /2$. Then for $\mu >-1$, 
 Fubini's theorem entails that \eqref{;Amlaw0} is rewritten as 
 \begin{align}\label{;fubini}
  \frac{1}{\pi }\exp \left( 
  \frac{\pi ^{2}}{8t}-\frac{\mu ^{2}}{2}t
  \right) 
  v^{\mu -1}\exp \left( 
  -\frac{1}{2v}
  \right) \ex \!\left[ 
  h(B_{t})\cosh B_{t}\cos \left\{ 
  \frac{\pi }{2}\left( 
  \frac{B_{t}}{t}-\mu 
  \right) 
  \right\} 
  \right] , 
 \end{align}
 where we set the function $h(x),\,x\in \R $, by 
 \begin{align*}
  h(x)=\int _{0}^{\infty }dr\,r^{\mu }\exp \left( 
  -\frac{v}{2}r^{2}
  \right) \cos \left( 
  r\sinh x-\frac{\pi \mu }{2}
  \right) , 
 \end{align*}
 which is equal, by changing the variables with 
 $r=\sqrt{(2/v)}s$, to 
 \begin{align*}
  &\left( 
  \frac{2}{v}
  \right) ^{(\mu +1)/2}
  \int _{0}^{\infty }ds\,s^{\mu }e^{-s^{2}}
  \cos \left( 
  2\frac{\sinh x}{\sqrt{2v}}s-\frac{\pi \mu }{2}
  \right) \\
  &=\sqrt{\frac{\pi }{(2v)^{\mu +1}}}
  \exp \left( 
  -\frac{\sinh ^{2}x}{2v}
  \right) 
  H_{\mu }\!\left( 
  \frac{\sinh x}{\sqrt{2v}}
  \right) 
 \end{align*}
 by the integral formula \eqref{;herf} of $H_{\mu }$ 
 with $\mu >-1$. Inserting the last expression of $h$ into 
 \eqref{;fubini} and rearranging terms lead to \eqref{;Amlaw} and 
 conclude the proof. 
\end{proof}

We end this section with a remark on the case $\mu \le -1$. 

\begin{rem}\label{;ribp}
 We take $\mu =-3/2$ and $-2$ as an illustration. Noting relation 
 \eqref{;Theta}, we use the function 
 $g$ to rewrite the integral in \eqref{;Amlaw0} with respect to $r$ as 
 \begin{align*}
  \frac{1}{\pi }\exp \left( 
  \frac{\pi ^{2}}{8t}
  \right) I(\mu ) && \text{with} && 
  I(\mu ):=\int _{0}^{\infty }dr\,r^{\mu }\exp \left( 
  -\frac{v}{2}r^{2}
  \right) g(r). 
 \end{align*}
 The proof of \pref{;preduc} shows that $I(\mu )$ may be expressed 
 in terms of $H_{\mu }$ when $\mu >-1$. 
 If we take $\mu =-3/2$, then integration by parts yields 
 \begin{align}\label{;qribp1}
  I(-3/2)=-2vI(1/2)+2\int _{0}^{\infty }\frac{dr}{r^{1/2}}\,\exp \left( 
  -\frac{v}{2}r^{2}
  \right) g'(r)  
 \end{align}
 owing to \eqref{;qpderi1}. 
 Recalling \eqref{;irepr4}, we have 
 \begin{align*}
  g'(r)&=-\ex \!\left[ 
  \cosh B_{t}\sinh B_{t}\sin (r\sinh B_{t}-\nu )\cos \left( 
  \frac{\pi }{2t}B_{t}-\nu 
  \right) 
  \right] \\
  &=-\ex \!\left[ 
  \cosh B_{t}\sinh B_{t}\cos \left( r\sinh B_{t}-\nu -\frac{\pi }{2}\right) 
  \cos \left( 
  \frac{\pi }{2t}B_{t}-\nu 
  \right) 
  \right] 
 \end{align*}
 for every $r>0$ and $\nu \in \R $. Therefore choosing 
 $\nu =-(3/4)\pi $ and appealing to Fubini's theorem and \eqref{;herf}, 
 we may express the second term on the right-hand side of \eqref{;qribp1} 
 in terms of $H_{-1/2}$. In the case $\mu =-2$, we have 
 in the same way as above, 
 \begin{align}\label{;qribp2}
  I(-2)=-vI(0)+\int _{0}^{\infty }\frac{dr}{r}\,\exp \left( 
  -\frac{v}{2}r^{2}
  \right) g'(r). 
 \end{align}
 Noting that 
 \begin{align*}
  g'(r)=-\ex \left[ 
  \cosh B_{t}\sinh B_{t}\sin (r\sinh B_{t})\cos \left( 
  \frac{\pi }{2t}B_{t}
  \right) 
  \right] ,\quad r>0, 
 \end{align*}
 and that $|\sin (r\sinh B_{t})/r|\le |\sinh B_{t}|$ for any $r>0$, 
 Fubini's theorem entails that the second term on the right-hand side 
 of \eqref{;qribp2} is written as 
 \begin{align*}
  -\ex \!\left[ 
  \cosh B_{t}\sinh B_{t}\cos \left( \frac{\pi }{2t}B_{t}
  \right) \int _{0}^{\infty }\frac{dr}{r}\,\exp \left( 
  -\frac{v}{2}r^{2}
  \right) \sin (r\sinh B_{t})
  \right] . 
 \end{align*}
 By the formulae 
 \begin{align}
  \int _{0}^{\infty }\frac{ds}{s}\,e^{-s^{2}}\sin (2xs)
  &=\sqrt{\pi }\int _{0}^{x}dy\,e^{-y^{2}},\quad x\ge 0, \label{;erf}\\
  H_{-1}(x)&=e^{x^{2}}\int _{x}^{\infty }dy\,e^{-y^{2}},
  \quad x\in \R , \label{;herf-1}
 \end{align}
 the integrand in the last expectation may be expressed in terms of $H_{-1}$. 
 As for formula \eqref{;herf-1}, see \cite[equation~(10.5.3)]{leb}. 
 Formula \eqref{;erf} may easily be verified by writing 
 \begin{align*}
  \frac{\sin (2xs)}{s}=2\int _{0}^{x}dy\,\cos (2sy), \quad s>0, 
 \end{align*}
 and using Fubini's theorem. The above illustration in the 
 two cases indicates that, when $-(m+1)\le \mu <-m$ 
 for some positive integer $m$, the density \eqref{;Amlaw0} 
 of the law of $\da{\mu }_{t}$ admits an expression 
 in terms of $H_{\mu +m}$ and $H_{\mu +m+1}$ (while 
 in the case $\mu =-1$, only $H_{-1}$ emerges).
\end{rem}

%%%%%%%%% New Section %%%%%%%%%
\section{A further discussion}\label{;sfd}
In this section, we develop further the discussion used in 
deriving \lref{;liden} to obtain a family of integral identities 
that includes relations \eqref{;irepr3} and \eqref{;diff0} 
as its special cases, which, in turn, yields \eqref{;irepr1} and 
\eqref{;irepr2}; another set of integral representations of 
$\Theta $ relevant to \tref{;mt}, is also provided.  

We keep $t>0$ fixed. We prove 

\begin{prop}\label{;punif}
 For every $r,\la \ge 0$ and $\ga \in \R $, it holds that 
 \begin{equation}\label{;equnif}
  \begin{split}
   &\ex \!\left[ 
   e^{-r\cosh B_{t}}\cos \left( 
   \frac{\pi }{2t}B_{t}+\la \sinh B_{t}-\ga B_{t}
   \right) 
   \right] \\
   &=\exp \left( 
   \frac{\pi \ga }{2}-\frac{\pi ^{2}}{8t}
   \right) 
   \ex \!\left[ 
   e^{-\la \cosh B_{t}}
   \cos \left( 
   r\sinh B_{t}+\ga B_{t}
   \right) 
   \right] . 
  \end{split}
 \end{equation}
\end{prop}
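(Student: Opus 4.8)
The plan is to recast both sides of \eqref{;equnif} as integrals against the Gaussian density of $B_{t}$ and to pass from one to the other by a single contour shift of an \emph{entire} integrand, in the spirit of the residue computation leading to \lref{;liden}, but now with no poles at all, since the denominator $\cosh (2z)+\cosh (2x)$ no longer appears.

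First I would introduce the entire function
\[
 g(z):=\exp \left(
 -\frac{z^{2}}{2t}-r\cosh z+i\la \sinh z+i\left( \frac{\pi }{2t}-\ga \right) z
 \right) ,\quad z\in \C ,
\]
and observe that, since $B_{t}$ has density $(2\pi t)^{-1/2}e^{-x^{2}/(2t)}$, the left-hand side of \eqref{;equnif} equals $(2\pi t)^{-1/2}\int _{\R }\mathrm{Re}\,g(x)\,dx$; indeed $\mathrm{Re}\,g(x)=e^{-x^{2}/(2t)}e^{-r\cosh x}\cos \left( \frac{\pi }{2t}x+\la \sinh x-\ga x\right) $. The key step is then to shift the contour of integration from the real axis to the horizontal line $\mathrm{Im}\,z=\pi /2$. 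Because $g$ is entire, Cauchy's theorem gives $\int _{\R }g(x)\,dx=\int _{\R }g(\xi +\pi i/2)\,d\xi $ once the two vertical segments are shown to be negligible. On the shifted line I would use the identities $\cosh (\xi +\pi i/2)=i\sinh \xi $ and $\sinh (\xi +\pi i/2)=i\cosh \xi $, which interchange the hyperbolic functions and are precisely what swaps the roles of $r$ and $\la $. A direct evaluation of the exponent yields
\[
 g(\xi +\pi i/2)=\exp \left( \frac{\pi \ga }{2}-\frac{\pi ^{2}}{8t}\right)
 \exp \left( -\frac{\xi ^{2}}{2t}-\la \cosh \xi \right)
 \exp \bigl( -i(r\sinh \xi +\ga \xi )\bigr) ,
\]
so that taking real parts and dividing by $\sqrt{2\pi t}$ reproduces exactly the right-hand side of \eqref{;equnif}.

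The main obstacle will be the rigorous justification of the contour shift, and this is where the hypotheses $r,\la \ge 0$ enter. On a vertical segment $z=\pm N+iy$ with $0\le y\le \pi /2$, the Gaussian factor contributes $\mathrm{Re}(-z^{2}/(2t))=-(N^{2}-y^{2})/(2t)$, forcing decay of order $e^{-N^{2}/(2t)}$; meanwhile $\mathrm{Re}(-r\cosh z)=-r\cosh N\cos y\le 0$ and $\mathrm{Re}(i\la \sinh z)=-\la \cosh N\sin y\le 0$ precisely because $r,\la \ge 0$, while the linear term stays bounded on $[0,\pi /2]$. Hence the integrand over each vertical segment is $O(e^{-N^{2}/(2t)})$ and vanishes as $N\to \infty $. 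The same Gaussian bound, uniform for $0\le \mathrm{Im}\,z\le \pi /2$, also legitimizes interchanging $\mathrm{Re}$ with the integral throughout. I expect the only genuine care to lie in bookkeeping these real-part estimates; the algebraic simplification of the exponent on $\mathrm{Im}\,z=\pi /2$, using the two shift identities above, is routine.
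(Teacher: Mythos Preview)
Your argument is correct. The single contour shift of the entire function
\[
 g(z)=\exp \!\left( -\frac{z^{2}}{2t}-r\cosh z+i\la \sinh z+i\Bigl( \frac{\pi }{2t}-\ga \Bigr) z\right)
\]
from $\mathrm{Im}\,z=0$ to $\mathrm{Im}\,z=\pi /2$ does the job: the algebra on the shifted line is as you wrote, and your estimates on the vertical segments are exactly right (the conditions $r,\la \ge 0$ ensure that both $-r\cosh z$ and $i\la \sinh z$ have nonpositive real part on the strip, and the linear term contributes only a bounded real part).

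This is genuinely different from, and more direct than, the paper's proof. The paper does not shift a single entire integrand; instead it first establishes \lref{;lrc} by a residue computation---integrating a function with the factor $\sinh (2z)/(\cosh (2z)+\cosh (2x))$ around a rectangle of height $\pi $, picking up residues at $\pm x+\pi i/2$---to produce pairs $(F,G)$ satisfying relation \thetag{i} of \lref{;lequiv}. It then invokes the implication \thetag{i}$\Rightarrow$\thetag{ii} of that lemma, obtaining identities that match only the $\la $-derivatives (and in one case the mixed $\la ,r$-derivatives) of the two sides; equality of the undifferentiated expressions is recovered by showing both tend to zero as $\la \to \infty $ via Riemann--Lebesgue and dominated convergence, and finally the two resulting identities are combined. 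Your route collapses all of this into one step by absorbing the Gaussian density into the integrand and choosing the half-height shift $\pi /2$, at which the poles of the paper's auxiliary kernel would sit---but your integrand has no such kernel, hence no poles. What the paper's route buys is that it exhibits \pref{;punif} as another instance of the $(F,G)$ machinery of \lref{;lequiv} already used for \tref{;mt}; what yours buys is a self-contained proof of \pref{;punif} that needs neither \lref{;lequiv} nor \lref{;lrc}.
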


Before giving a proof of the above proposition, 
we explain in the remark below how to obtain 
several relations in \sref{;intro} from \eqref{;equnif}: 
note that, thanks to \eqref{;scfinite}, 
differentiating both sides of \eqref{;equnif} 
at $\la =0$ yields the relation that for every $r\ge 0$ and 
$\ga \in \R $, 
\begin{equation}\label{;equnifd}
 \begin{split}
   &\ex \!\left[ 
   e^{-r\cosh B_{t}}\sinh B_{t}\sin \left\{ 
   \left( \frac{\pi }{2t}-\ga \right) B_{t}
   \right\}  
   \right] \\
   &=\exp \left( 
   \frac{\pi \ga }{2}-\frac{\pi ^{2}}{8t}
   \right) 
   \ex \!\left[ 
   \cosh B_{t}
   \cos \left( 
   r\sinh B_{t}+\ga B_{t}
   \right) 
   \right] . 
 \end{split}
\end{equation}

\begin{rem}\label{;rrecover}
\thetag{1} 
Taking $\ga =-\pi /(2t)$ in \eqref{;equnifd}, we have 
relation \eqref{;irepr3}. If we take $\ga =\pi /(2t)$, then 
relation \eqref{;diff0} also follows, which, together with 
\eqref{;irepr3}, entails \eqref{;irepr1} and \eqref{;irepr2}.  

\noindent 
\thetag{2} Moreover, if we take $\ga =0$ in \eqref{;equnifd}, then 
in view of the implication from \thetag{ii} to \thetag{i} 
in \lref{;lequiv}, we have 
\begin{align}\label{;iden3}
 \frac{1}{\pi }\exp \left( 
 \frac{\pi ^{2}}{8t}
 \right) 
 \ex \!\left[ 
 \frac{
 \sinh (2B_{t})\sin \left( \frac{\pi }{2t}B_{t}\right) 
 }
 {
 \cosh (2B_{t})+\cosh (2x)
 }
 \right] 
 =\frac{1}{\sqrt{2\pi t}}\exp \left( 
 -\frac{x^{2}}{2t}
 \right) \quad \text{for any $x\in \R $}
\end{align}
(see also \rref{;riden3} below), 
which may be restated, by replacing $x$ and $t$ by $x/2$ and 
$t/4$, respectively, and by using the scaling property of Brownian 
motion, as 
\begin{align*}
 \frac{1}{2\pi }\exp \left( 
 \frac{\pi ^{2}}{2t}
 \right) 
 \ex \!\left[ 
 \frac{
 \sinh B_{t}\sin \left( \frac{\pi }{t}B_{t}\right) 
 }
 {
 \cosh B_{t}+\cosh x
 }
 \right] 
 =\frac{1}{\sqrt{2\pi t}}\exp \left( 
 -\frac{x^{2}}{2t}
 \right) .  
\end{align*}
Note that the left-hand side is rewritten as 
\begin{align*}
 \frac{1}{2\pi }\exp \left( 
 \frac{\pi ^{2}}{2t}
 \right) 
 \int _{0}^{\infty }dr\,e^{-r\cosh x}\,
 \ex \!\left[ 
 e^{-r\cosh B_{t}}\sinh B_{t}\sin \left( 
 \frac{\pi }{t}B_{t}
 \right) 
 \right] 
\end{align*}
by Fubini's theorem. Therefore once the characterization 
\eqref{;LTr} of $\Theta $, namely the Laplace transform 
of $\Theta (r,t)/r$ in the variable $r>0$, is at our disposal, 
the integral representation \eqref{;irepr0} follows by 
the injectivity of Laplace transform. We also note that in view of 
\lref{;lequiv}, relation \eqref{;iden3} is equivalent to 
\begin{align*}
 \exp \left( 
 \frac{\pi ^{2}}{8t}
 \right) 
 \ex \!\left[ 
 \frac{
 \sinh B_{t}\sin \left( \frac{\pi }{2t}B_{t}\right) 
 }
 {
 \cosh B_{t}+\cosh x
 }
 \right] 
 =\ex \!\left[ 
 \frac{1}{\cosh (x+B_{t})}
 \right] \quad \text{for any }x\in \R , 
\end{align*}
which is a relation observed in \cite[Subsection~A.3]{har}. 
\end{rem}

It would also be of interest to note that by taking 
$\ga =\pi /(4t)$ in \eqref{;equnif}, there holds the 
following symmetry with respect to the variables 
$r,\la \ge 0$: 
\begin{align*}
 \ex \!\left[ 
 e^{-r\cosh B_{t}}\cos \left( 
 \la \sinh B_{t}+\frac{\pi }{4t}B_{t}
 \right) 
 \right] 
 =\ex \!\left[ 
 e^{-\la \cosh B_{t}}\cos \left( 
 r \sinh B_{t}+\frac{\pi }{4t}B_{t}
 \right) 
 \right] . 
\end{align*}

In order to prove \pref{;punif}, we consider a contour 
integral of a meromorphic function $f(z),\,z\in \C $, 
of the form 
\begin{align*}
 f(z)=\frac{\sinh (2z)}{\cosh (2z)+\cosh (2x)}
 \exp \left\{ 
 -\frac{1}{2t}\left( 
 z-\frac{\pi }{2}i
 \right) ^{2}
 \right\} \g \left( 
 z-\frac{\pi }{2}i
 \right) , 
\end{align*}
where $x\in \R $ is fixed and $\g $ denotes an 
even entire function. If we take the same contour 
as used in \sref{;prfmt} supposing $x\neq 0$, then we have 
for a suitable choice of $\g $, 
\begin{equation}\label{;rc}
 \begin{split}
 &\frac{1}{2\pi i}
 \int _{\R }d\xi \,\frac{\sinh (2\xi )}{\cosh (2\xi )+\cosh (2x)}
 \exp \left( 
 -\frac{\xi ^{2}}{2t}+\frac{\pi ^{2}}{8t}
 \right) \\
 &\qquad \times \left\{ 
 \exp \left( 
 \frac{\pi \xi }{2t}i
 \right) \g \left( 
 \xi -\frac{\pi }{2}i
 \right) 
 -\exp \left( 
 -\frac{\pi \xi }{2t}i
 \right) \g \left( 
 \xi +\frac{\pi }{2}i
 \right) 
 \right\} \\
 &=\exp \left( 
 -\frac{x^{2}}{2t}
 \right) \g (x)
 \end{split}
\end{equation}
by residue calculus applied to $f$. As will be seen, we are 
allowed to take $\g (z)=\exp (-\la \cosh z)\cos (\ga z)$ and 
$\exp (-\la \cosh z)\sin (\ga z)\sinh z$ for $\la \ge 0$ and 
$\ga \in \R $, obtaining the following lemma: set 
\begin{align*}
 \Sigma (x,\ga )
 \equiv \Sigma _{\la }(x,\ga )
 :=e^{\pi \ga /2}
 \sin \left( 
 \frac{\pi x}{2t}+\la \sinh x+\ga x
 \right) ,\quad x\in \R . 
\end{align*}

\begin{lem}\label{;lrc}
 For any $\la \ge 0$ and $\ga \in \R $, relation \eqref{;rel3} 
 holds for the following pairs of functions $\f (x)$ and $\g (x)$, 
 $x\in \R $: 
 \begin{align*}
  \thetag{i}& \ 
  \begin{cases}
   \f (x)=\dfrac{1}{2}\exp \left( 
   \dfrac{\pi ^{2}}{8t}
   \right) \sinh x\left\{ 
   \Sigma (x,\ga )+\Sigma (x,-\ga )
   \right\} ,\\
   \g (x)=\exp (-\la \cosh x)\cos (\ga x);
  \end{cases}\\
  \thetag{ii}& \ 
  \begin{cases}
   \f (x)=-\dfrac{1}{2}\exp \left( 
   \dfrac{\pi ^{2}}{8t}
   \right) \sinh x\cosh x\left\{ 
   \Sigma (x,\ga )-\Sigma (x,-\ga )
   \right\} ,\\
   \g (x)=\exp (-\la \cosh x)\sin (\ga x)\sinh x. 
  \end{cases}
 \end{align*}
\end{lem}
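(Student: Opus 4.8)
The plan is to read off both pairs $(\f,\g)$ from the single residue identity \eqref{;rc} by specializing the even entire function $\g$ to the two functions listed, and then to recast \eqref{;rc} into the form \eqref{;rel3}. To perform the recast I would write the expectation in \eqref{;rel3} as an integral against the Gaussian density $(2\pi t)^{-1/2}e^{-\xi^{2}/(2t)}$ of $B_{t}$ and compare integrands with \eqref{;rc}: the prefactor $(2\pi t)^{-1/2}$ and the common kernel $1/(\cosh(2\xi)+\cosh(2x))$ cancel, and since $\sinh(2\xi)=2\sinh\xi\cosh\xi$ the strictly positive factor $\cosh\xi$ may be removed. This forces
\[
 \f(\xi)=\frac{1}{2i}\exp\!\left(\frac{\pi^{2}}{8t}\right)\sinh\xi\left\{e^{\pi i\xi/(2t)}\g\!\left(\xi-\frac{\pi}{2}i\right)-e^{-\pi i\xi/(2t)}\g\!\left(\xi+\frac{\pi}{2}i\right)\right\},
\]
so the lemma reduces to (a) justifying \eqref{;rc} for the two choices of $\g$ and (b) checking that the $\f$ above coincides with the one displayed in \thetag{i} and \thetag{ii}.

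For (a) I would run the contour argument sketched before the lemma. Both choices $\g(z)=e^{-\la\cosh z}\cos(\ga z)$ and $\g(z)=e^{-\la\cosh z}\sin(\ga z)\sinh z$ are even entire functions; evenness is what makes the two poles $\pm x+(\pi/2)i$ inside the rectangle contribute residues $\tfrac12 e^{-x^{2}/(2t)}\g(x)$ and $\tfrac12 e^{-x^{2}/(2t)}\g(-x)$, equal by evenness, whose sum gives the right-hand side of \eqref{;rc}. The essential point is the decay on the two vertical sides $\mathrm{Re}\,z=\pm R$: there the Gaussian factor is $e^{-(R^{2}-(\eta-\pi/2)^{2})/(2t)}$, the ratio $\sinh(2z)/(\cosh(2z)+\cosh(2x))$ stays bounded, and $|e^{-\la\cosh(z-\pi i/2)}|=e^{-\la\,\mathrm{Re}\,\cosh(z-\pi i/2)}\le 1$ because $\mathrm{Re}\,\cosh(\pm R+i(\eta-\pi/2))=\cosh R\cos(\eta-\pi/2)\ge 0$ for $\eta\in[0,\pi]$; this is exactly where the hypothesis $\la\ge 0$ is used. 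The remaining factors $\cos(\ga z)$, $\sin(\ga z)$, $\sinh z$ grow at most like $e^{|\ga|\pi/2}$ or $e^{R}$, so the vertical contributions are $O(e^{R}e^{-R^{2}/(2t)})\to 0$. Since the two poles coalesce when $x=0$, I would treat $x\ne 0$ by residues and recover $x=0$ by continuity in $x$, as is already done for \lref{;liden}.

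For (b) the computation is bookkeeping with the addition theorems. Using $\cosh(\xi\pm\tfrac{\pi}{2}i)=\pm i\sinh\xi$ and $\sinh(\xi\pm\tfrac{\pi}{2}i)=\pm i\cosh\xi$, the amplitude $e^{-\la\cosh(\xi\mp\pi i/2)}=e^{\pm i\la\sinh\xi}$ has modulus one (again guaranteeing absolute integrability against the Gaussian), while $\cos(\ga(\xi\pm\tfrac{\pi}{2}i))$ and $\sin(\ga(\xi\pm\tfrac{\pi}{2}i))$ split, via $\cos(iy)=\cosh y$ and $\sin(iy)=i\sinh y$, into $\cosh(\ga\pi/2)$ and $\sinh(\ga\pi/2)$ pieces. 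Abbreviating $\theta=\pi\xi/(2t)+\la\sinh\xi$ and collapsing the bracket by Euler's formula, case \thetag{i} gives $\tfrac1{2i}\{\cdots\}=\cos(\ga\xi)\cosh(\ga\pi/2)\sin\theta+\sin(\ga\xi)\sinh(\ga\pi/2)\cos\theta$, whereas case \thetag{ii} yields the analogous combination multiplied by $-\cosh\xi$, with the roles of $\cosh(\ga\pi/2)$ and $\sinh(\ga\pi/2)$ interchanged. Recognizing $\Sigma(\xi,\pm\ga)=e^{\pm\pi\ga/2}\sin(\theta\pm\ga\xi)$ and expanding $\tfrac12\{\Sigma(\xi,\ga)\pm\Sigma(\xi,-\ga)\}$ by the addition formula reproduces exactly these two expressions, so the displayed $\f$ becomes the one claimed in \thetag{i} and \thetag{ii} respectively.

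The main obstacle is the rigorous contour estimate in step (a): pinning down the pole structure from $\cosh(2z)+\cosh(2x)=2\cosh(z+x)\cosh(z-x)$, confirming the residue value $\tfrac12$, and bounding the vertical sides uniformly in the whole range $\la\ge 0$, $\ga\in\R$. The trigonometric matching in (b), though lengthy, is routine once the boundary values $\g(\xi\pm\tfrac{\pi}{2}i)$ are in hand.
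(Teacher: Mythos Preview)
Your proposal is correct and follows essentially the same route as the paper: justify the residue identity \eqref{;rc} for the two specific even entire functions $\g$ by bounding the vertical sides of the rectangular contour via $|e^{-\la\cosh(L+i\eta-\pi i/2)}|=e^{-\la\cosh L\sin\eta}\le 1$ (this is exactly the paper's key estimate), and then read off $\f$ from the integrand. The paper's write-up is terser---it leaves the trigonometric identification of $\f$ implicit and, as an aside, remarks that \thetag{ii} may alternatively be deduced from \thetag{i} by replacing $\ga$ with $\ga\pm i$ and taking the difference---but your more detailed bookkeeping in step~(b) is a faithful expansion of the same argument.
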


\begin{proof}
 In the case $\g (z)=\exp (-\la \cosh z)\cos (\ga z),\,z\in \C $, 
 because of the fact that 
 \begin{align*}
  \left| 
  \exp \left\{ 
  -\la \cosh \left( 
  L+i\eta -\frac{\pi }{2}i
  \right) 
  \right\} 
  \right| =\exp (-\la \cosh L\sin \eta )\le 1
\end{align*} 
for any $L\in \R $ and $0\le \eta \le \pi $ when $\la \ge 0$, 
it follows readily that 
\begin{align*}
 \left| 
 \int _{0}^{\pi }d\eta \,f(L+i\eta )
 \right| \xrightarrow[|L|\to \infty ]{}0, 
\end{align*}
which justifies \eqref{;rc}, yielding pair~\thetag{i}. The same justification 
is also true in the case 
$\g (z)=\exp (-\la \cosh z)\sin (\ga z)\sinh z,\,z\in \C $, 
and leads to  \thetag{ii}. As it is clear that $\ga $ may be any 
complex number in the above argument, it is also possible to 
obtain \thetag{ii} from \thetag{i} by using the following 
relations: 
\begin{align*}
 \frac{\Sigma (x,\ga -i)-\Sigma (x,\ga +i)}{2i}&=-\Sigma (x,\ga )\cosh x, \\
 \frac{\cos \left\{ (\ga -i)x\right\} 
 -\cos \left\{ (\ga +i)x\right\} 
 }{2i}&=\sin (\ga x)\sinh x
\end{align*}
for any $x,\ga \in \R $. We conclude the proof of the lemma. 
\end{proof}

\begin{rem}\label{;riden3}
 Relation \eqref{;iden3} is nothing but the case 
 $\la =\ga =0$ in \thetag{i}; in other words, it is obtained 
 simply by taking $\g \equiv 1$ in \eqref{;rc}. 
\end{rem}

Using the above lemma, we prove \pref{;punif}. 

\begin{proof}[Proof of \pref{;punif}]
 By \lref{;lequiv} and \thetag{i} of \lref{;lrc}, we have for any 
 $r,\la \ge 0$ and $\ga \in \R $, 
 \begin{align*}
  &\frac{1}{2}\exp \left( 
  \frac{\pi ^{2}}{8t}
  \right) \ex\!\left[ 
  e^{-r\cosh B_{t}}
  \sinh B_{t}\left\{ 
  \Sigma _{\la }(B_{t},\ga )+\Sigma _{\la }(B_{t},-\ga )
  \right\} 
  \right] \\
  &=\ex \!\left[ 
  e^{-\la \cosh B_{t}}\cos (\ga B_{t})
  \cosh B_{t}\cos (r\sinh B_{t})
  \right] , 
 \end{align*}
 which shows that the first derivatives with respect to 
 $\la $ of the following two expressions agree: 
 \begin{align}
  &\frac{1}{2}\exp \left( 
  \frac{\pi ^{2}}{8t}
  \right) \ex\!\left[ 
  e^{-r\cosh B_{t}}
  \left\{ 
  \Tilde{\Sigma }_{\la }(B_{t},\ga )+\Tilde{\Sigma }_{\la }(B_{t},-\ga )
  \right\} 
  \right] , \label{;ftn1}\\
  &\ex \!\left[ 
  e^{-\la \cosh B_{t}}\cos (\ga B_{t})\cos (r\sinh B_{t})
  \right] , \label{;ftn2}
 \end{align}
where we set 
\begin{align*}
 \Tilde{\Sigma }_{\la }(x,\ga )
 :=e^{\pi \ga /2}\cos \left( 
 \frac{\pi x}{2t}+\la \sinh x+\ga x
 \right) ,\quad x,\ga \in \R . 
\end{align*}
(We are allowed to interchange the order of differentiation 
and expectation thanks to \eqref{;scfinite}.) 
Moreover, by the Riemann--Lebesgue lemma, 
the former expression \eqref{;ftn1} converges to $0$ as $\la \to \infty $, 
and by the bounded convergence theorem, the latter expression 
\eqref{;ftn2} does as well. Therefore the two expressions 
\eqref{;ftn1} and \eqref{;ftn2} agree. Similarly, 
by \lref{;lequiv} and \thetag{ii} of \lref{;lrc}, we have 
\begin{align*}
  &-\frac{1}{2}\exp \left( 
  \frac{\pi ^{2}}{8t}
  \right) \ex\!\left[ 
  e^{-r\cosh B_{t}}
  \sinh B_{t}\cosh B_{t}\left\{ 
  \Sigma _{\la }(B_{t},\ga )-\Sigma _{\la }(B_{t},-\ga )
  \right\} 
  \right] \\
  &=\ex \!\left[ 
  e^{-\la \cosh B_{t}}\sin (\ga B_{t})
  \sinh B_{t}\cosh B_{t}\cos (r\sinh B_{t})
  \right] . 
 \end{align*}
 If we consider the following two expressions  
 \begin{align}
  &\frac{1}{2}\exp \left( 
  \frac{\pi ^{2}}{8t}
  \right) \ex\!\left[ 
  e^{-r\cosh B_{t}}
  \left\{ 
  \Tilde{\Sigma }_{\la }(B_{t},\ga )-\Tilde{\Sigma }_{\la }(B_{t},-\ga )
  \right\} 
  \right] , \label{;ftn3}\\
  &\ex \!\left[ 
  e^{-\la \cosh B_{t}}\sin (\ga B_{t})\sin (r\sinh B_{t})
  \right] , \label{;ftn4}
 \end{align} 
 then by differentiating them with respect to $\la $ and $r$ 
 successively, and by using the same reasoning as above, 
 the last identity entails that those two expressions 
 also agree. Consequently, the difference of \eqref{;ftn1} 
 and \eqref{;ftn3} coincides with that of \eqref{;ftn2} and 
 \eqref{;ftn4}, which proves the proposition. 
\end{proof}

Identity \eqref{;equnifd} enables us to obtain yet another set of 
integral representations of $\Theta$, which 
we put in the next proposition. 

\begin{prop}\label{;pother}
 For every $r>0$ and $t>0$, it holds that 
 \begin{align}
  \Theta (r,t)&=-\frac{r}{\pi }\exp \left( \frac{9\pi ^{2}}{8t}\right) 
  \ex \!\left[ 
  \cosh B_{t}\cos \left( r\sinh B_{t}+\frac{\pi }{t}B_{t}\right) \cos \left( 
  \frac{\pi }{2t}B_{t}
  \right) 
  \right] \label{;other1}\\
  &=\frac{r}{\pi }\exp \left( \frac{9\pi ^{2}}{8t}\right) 
  \ex \!\left[ 
  \cosh B_{t}\sin \left( r\sinh B_{t}+\frac{\pi }{t}B_{t}\right) \sin \left( 
  \frac{\pi }{2t}B_{t}
  \right) 
  \right] \label{;other2}\\
  &=-\frac{r}{2\pi }\exp \left( \frac{9\pi ^{2}}{8t}\right) 
  \ex \!\left[ 
  \cosh B_{t}\cos \left( r\sinh B_{t}+
  \frac{3\pi }{2t}B_{t}
  \right) 
  \right] . \label{;other3}
 \end{align}
 More generally, we have for every $r>0$ and $t>0$, 
 \begin{align}\label{;other4}
  \Theta (r,t)&=\frac{r}{\pi }\exp \left( \frac{9\pi ^{2}}{8t}\right) 
  \ex \!\left[ 
  \cosh B_{t}\sin \left( r\sinh B_{t}+\frac{\pi }{t}B_{t}+\nu \right) 
  \sin \left( \frac{\pi }{2t}B_{t}-\nu 
  \right) 
  \right] , 
 \end{align}
 where $\nu \in \R $ is arbitrary. 
\end{prop}

\begin{proof}
 The third representation \eqref{;other3} follows by taking 
 $\ga =3\pi /(2t)$ in \eqref{;equnifd} and noting \eqref{;irepr0d}. 
 Then the first two representations \eqref{;other1} and 
 \eqref{;other2} are obtained by observing that their arithmetic mean 
 agrees with \eqref{;other3} and their difference 
 vanishes because of \eqref{;diff0}. The last representation 
 \eqref{;other4} is proven in the same way as in the proof of 
 \eqref{;irepr4}. 
\end{proof}

\begin{rem}\label{;rcomb}
By combining \tref{;mt} and \pref{;pother}, it is also possible to 
derive the following representations: for every 
$r>0$ and $t>0$, 
\begin{align*}
 \Theta (r,t)&=\frac{r}{2\pi }
 \frac{\exp \bigl( \frac{\pi ^{2}}{2t}\bigr) }
 {\sinh \!\left( \frac{\pi ^{2}}{2t}\right) }
 \exp \left( \frac{\pi ^{2}}{8t}\right) 
 \ex \!\left[ 
 \cosh B_{t}\cos \left( r\sinh B_{t}+\frac{\pi }{2t}B_{t}\right) \cos \left( 
 \frac{\pi }{t}B_{t}
 \right) 
 \right] \\
 &=\frac{r}{2\pi }
 \frac{\exp \bigl( \frac{\pi ^{2}}{2t}\bigr) }
 {\cosh \!\left( \frac{\pi ^{2}}{2t}\right) }
 \exp \left( \frac{\pi ^{2}}{8t}\right) 
 \ex \!\left[ 
 \cosh B_{t}\sin \left( r\sinh B_{t}+\frac{\pi }{2t}B_{t}\right) \sin \left( 
 \frac{\pi }{t}B_{t}
 \right) 
 \right] .
\end{align*}
In fact, replacing $\ga $ in \eqref{;equnifd} by $\pi /(2t)\pm \ga $, 
one may deduce that for every $r>0$, $t>0$ and $\ga \in \R $, 
\begin{align*}
 &\sinh \left ( \frac{\pi \ga }{2}\right) \ex \!\left[ 
 e^{-r\cosh B_{t}}\sinh B_{t}\sin (\ga B_{t})
 \right] \\
 &=\exp \left( \frac{\pi ^{2}}{8t}\right) 
 \ex \!\left[ 
 \cosh B_{t}\cos \left( r\sinh B_{t}+\frac{\pi }{2t}B_{t}\right) 
 \cos (\ga B_{t}) 
 \right], \\
 &\cosh \left ( \frac{\pi \ga }{2}\right) \ex \!\left[ 
 e^{-r\cosh B_{t}}\sinh B_{t}\sin (\ga B_{t})
 \right] \\
 &=\exp \left( \frac{\pi ^{2}}{8t}\right) 
 \ex \!\left[ 
 \cosh B_{t}\sin \left( r\sinh B_{t}+\frac{\pi }{2t}B_{t}\right) 
 \sin (\ga B_{t}) 
 \right] . 
\end{align*} 
\end{rem}

We recall from \cite[Proposition~3.3]{har} that for any $\la ,r\ge 0$, 
\begin{align}\label{;jlt}
 \ex\!\left[ 
 \exp \left( 
 -\la \eb{t}-\frac{\la ^{2}+r^{2}}{2}A_{t}
 \right) 
 \right] 
 =\ex\!\left[ 
 e^{-\la \cosh B_{t}}\cos (r\sinh B_{t})
 \right] . 
\end{align}
Hence, taking $\ga =0$ in \eqref{;equnif}, we have the following 
relation: 
\begin{align}\label{;jltd}
 \ex\!\left[ 
 \exp \left( 
 -\la \eb{t}-\frac{\la ^{2}+r^{2}}{2}A_{t}
 \right) 
 \right] 
 =\exp \left( 
 \frac{\pi ^{2}}{8t}
 \right) \ex\!\left[ 
 e^{-r\cosh B_{t}}\cos \left( \frac{\pi }{2t}B_{t}+\la \sinh B_{t}\right) 
 \right] . 
\end{align}
As for the former relation \eqref{;jlt}, we also refer to 
\cite[Proposition~2.4]{jw}, which may be regarded as the 
case where $r$ in \eqref{;jlt} is replaced by a purely imaginary 
number with modulus not exceeding $\la $.
To our knowledge, the latter relation \eqref{;jltd} has not been 
noticed before. We conclude this section by pointing out that 
one can easily derive from \eqref{;jltd} simple explicit expressions 
of the laws of $A_{t}$ and $A^{(1)}_{t}$; notice that in 
\cite{har}, relation \eqref{;jlt} is obtained independently of formula 
\eqref{;jl1}, by using so-called Bougerol's identity and a certain 
invariance formula for Cauchy random variable. 

Taking $\la =0$ in \eqref{;jltd}, we have 
\begin{align}\label{;A0lt}
 \ex \!\left[ 
 \exp \left( 
 -\frac{r^{2}}{2}A_{t}
 \right) 
 \right] 
 =\exp \left( 
 \frac{\pi ^{2}}{8t}
 \right) \ex \!\left[ 
 e^{-r\cosh B_{t}}\cos \left( 
 \frac{\pi }{2t}B_{t}
 \right) 
 \right] . 
\end{align}
Moreover, if we differentiate both sides of \eqref{;jltd} at $\la =0$, 
we also have 
\begin{align}\label{;A1lt}
 e^{t/2}\ex \!\left[ 
 \exp \left\{ 
 -\frac{r^{2}}{2}A^{(1)}_{t}
 \right\} 
 \right] 
 =\exp \left( 
 \frac{\pi ^{2}}{8t}
 \right) \ex \!\left[ 
 e^{-r\cosh B_{t}}\sinh B_{t}\sin \left( 
 \frac{\pi }{2t}B_{t}
 \right) 
 \right] , 
\end{align}
where on the left-hand side, we used the Cameron--Martin relation. 
Inserting the rewriting 
\begin{align*}
  e^{-r\cosh B_{t}}
  =\int _{0}^{\infty }dv\,\frac{\cosh B_{t}}{\sqrt{2\pi v^{3}}}
  \exp \left( 
  -\frac{\cosh ^{2}B_{t}}{2v}
  \right) \exp \left( 
  -\frac{r^{2}}{2}v
  \right) 
\end{align*}
into the right-hand sides of \eqref{;A0lt} and \eqref{;A1lt}, and 
using Fubini's theorem, we see that for $v>0$, 
\begin{align}
 \frac{\pr (A_{t}\in dv)}{dv}
 &=\exp \left( 
 \frac{\pi ^{2}}{8t}
 \right) 
 \ex \!\left[ 
 \frac{\cosh B_{t}}{\sqrt{2\pi v^{3}}}
 \exp \left( 
 -\frac{\cosh ^{2}B_{t}}{2v}
 \right) \cos \left( \frac{\pi }{2t}B_{t}\right) 
 \right] , \label{;A0law}\\
 \frac{\pr (\da{1}_{t}\in dv)}{dv}
 &=\exp \left( 
 \frac{\pi ^{2}}{8t}-\frac{t}{2}
 \right) 
 \ex \!\left[ 
 \frac{\sinh (2B_{t})}{\sqrt{2^{3}\pi v^{3}}}
 \exp \left( 
 -\frac{\cosh ^{2}B_{t}}{2v}
 \right) \sin \left( \frac{\pi }{2t}B_{t}\right) 
 \right] , \label{;A1law}
\end{align}
thanks to the injectivity of Laplace transform. These two expressions agree with \eqref{;Amlaw} when 
$\mu =0$ and $\mu =1$, respectively. 

\begin{rem}\label{;rboug}
 \thetag{1}~On the other hand, by taking $\la =0$ in the former relation 
 \eqref{;jlt}, we have 
 \begin{align*}
  \ex \!\left[ 
  \exp \left( 
  -\frac{r^{2}}{2}A_{t}
  \right) 
  \right] =\ex \!\left[ 
  \cos (r\sinh B_{t})
  \right] 
 \end{align*}
 for any $r\ge 0$, which relation is explained by Bougerol's original 
 identity 
 \begin{align*}
  \beta (A_{t})\stackrel{(d)}{=}\sinh B_{t}. 
 \end{align*}
 Here and below, $\{ \beta (s)\} _{s\ge 0}$ denotes a one-dimensional 
 standard Brownian motion independent of $B$. Moreover, if we differentiate 
 both sides of \eqref{;jlt} at $\la =0$, then by 
 the Cameron--Martin relation, we have 
 \begin{align*}
  \ex \!\left[ 
  \exp \left\{ 
  -\frac{r^{2}}{2}A^{(1)}_{t}
  \right\} 
  \right] 
  =\ex \!\left[ 
  \cos \left\{ 
  r\sinh (B_{t}+\ve t)
  \right\} 
  \right] 
 \end{align*}
 for any $r\ge 0$, where on the right-hand side, $\ve $ is 
 a Rademacher (or symmetric Bernoulli) random variable 
 taking values $\pm 1$ with probability $1/2$, independently of 
 $B$. By rewriting the left-hand side of the last identity as 
 \[
 \ex \!\left[ 
 \cos \left\{ 
 r\beta \bigl( A^{(1)}_{t}\bigr) 
 \right\} 
 \right] , 
 \] 
 the injectivity of Fourier transform entails the 
 following variant of Bougerol's identity: 
 \begin{align*}
  \beta \bigl( A^{(1)}_{t}\bigr) \stackrel{(d)}{=}\sinh (B_{t}+\ve t). 
 \end{align*}
 For Bougerol's identity and its variants including the above one, 
 see the survey \cite{vak} by Vakeroudis; different kinds of extensions 
 of Bougerol's identity may be found in \cite{har}. 
 
\noindent 
\thetag{2}~Relation~\eqref{;jltd} also enables us to derive the 
expression of the joint density \eqref{;jl1} with the integral 
representation \eqref{;irepr1} of $\Theta $ inserted in, but 
we omit details here. 
\end{rem}
\bigskip 

\noindent 
{\bf Acknowledgements.} The author would like to thank anonymous 
referees for their valuable comments.  

%%%%%%%%% Appendix %%%%%%%%%
\appendix 
\section*{Appendix}\label{;app}
\renewcommand{\thesection}{A}
\setcounter{equation}{0}
\setcounter{prop}{0}
\setcounter{lem}{0}
\setcounter{rem}{0}

We complete the proof of \lref{;lequiv}. 

\begin{proof}[Proof of \thetag{ii}\ $\Rightarrow $\ \thetag{iii} in \lref{;lequiv}]
 Given $x\in \R $, we integrate both sides of \eqref{;rel2} multiplied by 
 $e^{-r\cosh x}$ with respect to $r\ge 0$. Then by Fubini's theorem, 
 the left-hand side turns into that of \eqref{;rel1}. Therefore it suffices to 
 show that 
 \begin{align}\label{;eqa1}
  \int _{0}^{\infty }dr\,e^{-r\cosh x}\,
  \ex \!\left[ 
  \g (B_{t})\cosh B_{t}\cos (r\sinh B_{t})
  \right] 
  =\ex \!\left[ 
  \frac{\g (B_{t})}{\cosh (x+B_{t})}
  \right] . 
 \end{align}
 By the latter condition in \eqref{;intcond}, we may also use Fubini's theorem 
 to rewrite the left-hand side of the claimed identity \eqref{;eqa1} as 
 \begin{align}
  &\ex \!\left[ 
  \g (B_{t})\cosh B_{t}
  \int _{0}^{\infty }dr\,e^{-r\cosh x}
  \cos (r\sinh B_{t})
  \right] \notag \\
  &=\ex \!\left[ 
  \g (B_{t})
  \frac{\cosh x\cosh B_{t}}{\cosh ^{2}x+\sinh ^{2}B_{t}}
  \right] . \label{;eqa2}
 \end{align}
 On the other hand, by the symmetry of $\g $, the right-hand side of 
 \eqref{;eqa1} is equal to 
 \begin{align*}
  &\frac{1}{2}\ex \!\left[ 
  \g (B_{t})\left\{ 
  \frac{1}{\cosh (x+B_{t})}+\frac{1}{\cosh (x-B_{t})}
  \right\} 
  \right] \\
  &=\ex \!\left[ 
  \g (B_{t})
  \frac{\cosh x\cosh B_{t}}{\cosh (x+B_{t})\cosh (x-B_{t})}
  \right] . 
 \end{align*}
 Noting the fact that 
 \begin{equation}\label{;eqa3}
  \begin{split}    
   \cosh (x+y)\cosh (x-y)
   &=\frac{1}{2}\left\{ 
   \cosh (2x)+\cosh (2y)
   \right\} \\
   &=\cosh ^{2}x+\sinh ^{2}y
  \end{split}
 \end{equation}
 for any $x,y\in \R $, we compare the last expression with 
 \eqref{;eqa2} to conclude identity \eqref{;eqa1}. 
\end{proof}

We turn to the proof of the implication from \thetag{iii} to \thetag{i}. 
To this end, we prepare the following lemma: 
\begin{lem}\label{;lelem}
 For every $x, b\in \R $, it holds that 
 \begin{align*}
  \int _{\R }
  \frac{dy}{\cosh (x+y)}\,
  \frac{1}{\cosh (2b)+\cosh (2y)}
  =\frac{\pi }{2\cosh b\left( \cosh b+\cosh x\right) }. 
 \end{align*}
\end{lem}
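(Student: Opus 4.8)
The plan is to evaluate the integral by residue calculus, in the same spirit as the contour arguments of \sref{;prfmt} and \sref{;sfd}. I would consider the meromorphic function
\[
 f(z)=\frac{1}{\cosh (x+z)}\cdot \frac{1}{\cosh (2b)+\cosh (2z)},\quad z\in \C ,
\]
and integrate it counterclockwise around the rectangle with vertices $\pm R$ and $\pm R+\pi i$. The reason for choosing height $\pi $ is that $\cosh (2z)$ has period $\pi i$ whereas $\cosh (x+z)$ merely changes sign under $z\mapsto z+\pi i$, so that $f(z+\pi i)=-f(z)$. Hence the top edge, traversed from right to left, reproduces the bottom edge, and as $R\to \infty $ the two horizontal edges together contribute $2\int _{\R }f(y)\,dy$. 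On the vertical edges one has $|\cosh (x+z)|\ge |\sinh (x\pm R)|$ and $|\cosh (2z)|\ge |\sinh (2R)|$, so that $|f|=O(e^{-3R})$ uniformly in the height; since the edges have length $\pi $, their contributions vanish. Thus $2\int _{\R }f(y)\,dy$ equals $2\pi i$ times the sum of the residues of $f$ in the strip $0<\mathrm{Im}\,z<\pi $.

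The next step is to locate those poles and compute the residues. Writing $\cosh (2b)+\cosh (2z)=2(\cosh ^{2}b+\sinh ^{2}z)$ as in \eqref{;eqa3}, one checks that, for $b>0$ and generic $x$, $f$ has exactly three simple poles in the strip: $z_{0}=-x+\pi i/2$, arising from the zero of $\cosh (x+z)$, and $z_{\pm }=\pm b+\pi i/2$, arising from $\cosh (2b)+\cosh (2z)=0$. Using the elementary relations $\sinh (a+\pi i/2)=i\cosh a$ and $\cosh (a+\pi i/2)=i\sinh a$, a direct computation gives
\[
 \mathrm{Res}_{z_{0}}f=\frac{-i}{\cosh (2b)-\cosh (2x)},\qquad
 \mathrm{Res}_{z_{\pm }}f=\mp \frac{1}{2i\,\sinh (x\pm b)\,\sinh (2b)}.
\]

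Finally I would add the three residues and simplify. Combining the two $z_{\pm }$ terms over a common denominator and using $\sinh (x+b)-\sinh (x-b)=2\cosh x\sinh b$, $\sinh (x+b)\sinh (x-b)=\sinh ^{2}x-\sinh ^{2}b$ and $\sinh (2b)=2\sinh b\cosh b$, while rewriting the $z_{0}$ term via $\cosh (2b)-\cosh (2x)=2(\sinh ^{2}b-\sinh ^{2}x)$, reduces the total to
\[
 \frac{i}{2(\sinh ^{2}x-\sinh ^{2}b)}\left( 1-\frac{\cosh x}{\cosh b}\right) .
\]
Since $\sinh ^{2}x-\sinh ^{2}b=(\cosh x-\cosh b)(\cosh x+\cosh b)$, the factor $\cosh b-\cosh x$ cancels and the sum of residues collapses to $-i/\bigl( 2\cosh b(\cosh b+\cosh x)\bigr) $; multiplying by $2\pi i$ and halving yields exactly the asserted value. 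The work here is thus purely computational, and the only care needed is in the degenerate configurations $b=0$ (where $z_{\pm }$ coalesce into a double pole) and $\cosh x=\cosh b$ (where $z_{0}$ merges with $z_{+}$ or $z_{-}$). Rather than redo the residue analysis there, I would note that both sides of the identity are continuous in $(x,b)$—the integral converging locally uniformly because the integrand is $O(e^{-3|y|})$—so it suffices to prove the formula on the dense set $\{ b>0,\ \cosh x\neq \cosh b\} $ and let the remaining cases follow by continuity. Since the integrand and the right-hand side are even in $b$, restricting to $b>0$ loses no generality.
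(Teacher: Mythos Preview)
Your argument is correct: the contour, the periodicity relation $f(z+\pi i)=-f(z)$, the location of the three poles, the residues and their simplification all check out, and the continuity argument covers the degenerate cases exactly as the paper itself does for $|x|=|b|$.

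The paper, however, takes a different and entirely elementary route. It symmetrizes the integrand by replacing $1/\cosh (x+y)$ with $\frac{1}{2}\bigl\{ 1/\cosh (x+y)+1/\cosh (x-y)\bigr\} $, uses the identity $\cosh (x+y)\cosh (x-y)=\cosh ^{2}x+\sinh ^{2}y$ (Equation~\eqref{;eqa3}), substitutes $z=\sinh y$, and then does a partial-fraction decomposition to reduce everything to $\int _{\R }dz/(a^{2}+z^{2})=\pi /a$. Your residue computation is perfectly in keeping with the contour methods of \sref{;prfmt} and \sref{;sfd}, and arguably more systematic; the paper's approach has the advantage of being real-variable throughout and slightly shorter, since the symmetrization kills the oscillatory part up front and avoids tracking three separate residues.
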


\begin{proof}
 We may assume $|x|\neq |b|$; validity in the case 
 $|x|=|b|$ is verified by passing to the limit. 
 By symmetrization and by the relation 
 $
 \cosh (2b)+\cosh (2y)=2\left( 
 \cosh ^{2}b+\sinh ^{2}y
 \right) 
 $, 
 the left-hand side of the claimed identity is equal to 
 \begin{align*}
  \frac{1}{4}\int _{\R }
  dy\left\{ 
  \frac{1}{\cosh (x+y)}+\frac{1}{\cosh (x-y)}
  \right\} 
  \frac{1}{\cosh ^{2}b+\sinh ^{2}y}, 
 \end{align*}
 which is rewritten, due to relation \eqref{;eqa3}, as 
 \begin{align*}
  &\frac{\cosh x}{2}\int _{\R }dy\,
  \frac{\cosh y}
  {
  \left( \cosh ^{2}x+\sinh ^{2}y\right) \!
  \left( \cosh ^{2}b+\sinh ^{2}y\right) 
  }\\
  &=\frac{\cosh x}{2\left( \cosh ^{2}b-\cosh ^{2}x\right) }
  \int _{\R }dz\left( 
  \frac{1}{\cosh ^{2}x+z^{2}}-\frac{1}{\cosh ^{2}b+z^{2}}
  \right) \\
  &=\frac{\cosh x}{2\left( \cosh ^{2}b-\cosh ^{2}x\right) }
  \left( 
  \frac{\pi }{\cosh x}-\frac{\pi }{\cosh b}
  \right) , 
 \end{align*}
 where we changed the variables with $\sinh y=z$ in the 
 second line. 
 Now the claimed identity follows.  
\end{proof}

We are prepared to finish the proof of \lref{;lequiv}. 

\begin{proof}[Proof of \thetag{iii}\ $\Rightarrow $\ \thetag{i} in \lref{;lequiv}]
 We appeal to the injectivity of Fourier transform. For this purpose, 
 we first observe that 
 \begin{align}\label{;obs3}
  \int _{\R }dx\,\ex \!\left[ 
  \frac{|\f (B_{t})|\cosh B_{t}}{\cosh (2B_{t})+\cosh (2x)}
  \right] <\infty ,  
  &&  
  \int _{\R }dx\,\ex \!\left[ 
  \frac{|\g (B_{t})|}{\cosh (x+B_{t})}
  \right] <\infty . 
 \end{align}
 Indeed, the former observation is immediate from 
 \eqref{;fubi1} and the former condition in \eqref{;intcond} 
 while the latter is clear by the latter condition 
 in \eqref{;intcond}. For an arbitrarily fixed $\xi \in \R $, we 
 integrate both sides of \eqref{;rel1} multiplied by $\cos (\xi x)$ 
 with respect to $x\in \R $. Then by the latter finiteness in \eqref{;obs3} 
 and Fubini's theorem, the right-hand side turns into 
 \begin{align}\label{;rhs2}
  \ex \!\left[ 
  \g (B_{t})\int _{\R }dx\,\frac{\cos (\xi x)}{\cosh (x+B_{t})}
  \right] 
  &=\frac{\pi }{\cosh (\frac{\pi }{2}\xi )}\ex \!\left[ 
  \g (B_{t})\cos (\xi B_{t})
  \right] , 
 \end{align}
 where we used the fact that 
 \begin{align}\label{;fact2}
  \int _{\R }dx\,\frac{\cos (\xi x)}{\cosh x}
  =\frac{\pi }{\cosh (\frac{\pi }{2}\xi )}, 
 \end{align}
 which is verified by standard residue calculus. 
 On the other hand, as for the left-hand side of \eqref{;rel1}, 
 we have 
 \begin{align*}
  &\int _{\R }dx\,\cos (\xi x)\ex \!\left[ 
  \frac{\f (B_{t})}{\cosh B_{t}+\cosh x}
  \right] \\
  &=\frac{2}{\pi }\int _{\R }dx\,\cos (\xi x)
  \ex \!\left[ 
  \f (B_{t})\cosh B_{t}
  \int _{\R }\frac{dy}{\cosh (x+y)}
  \frac{1}{\cosh (2B_{t})+\cosh (2y)}
  \right] \\
  &=\frac{2}{\pi }\int _{\R }dy\,
  \ex \!\left[ 
  \frac{\f (B_{t})\cosh B_{t}}{\cosh (2B_{t})+\cosh (2y)}
  \right] \int _{\R }dx\,\frac{\cos (\xi x)}{\cosh (x+y)}\\
  &=\frac{2}{\cosh (\frac{\pi }{2}\xi )}\int _{\R }dy\,\cos (\xi y)
  \ex \!\left[ 
  \frac{\f (B_{t})\cosh B_{t}}{\cosh (2B_{t})+\cosh (2y)}
  \right] , 
 \end{align*}
 where we used \lref{;lelem} for the second line, Fubini's theorem  
 for the third thanks to the former finiteness in \eqref{;obs3}, 
 and fact \eqref{;fact2} for the fourth. Since the last 
 expression agrees with \eqref{;rhs2} for any $\xi \in \R $ 
 and the function $\g $ is assumed to be symmetric, 
 the injectivity of Fourier transform entails relation \eqref{;rel3}. 
 The proof completes. 
\end{proof}

\begin{rem}\label{;rappend}
 By using \lref{;lelem}, implication \thetag{i} $\Rightarrow$ \thetag{iii} 
 may also be proven in the same manner as in the proof of 
 \thetag{i} $\Rightarrow $ \thetag{ii} given in \sref{;prfmt}. 
\end{rem}

%%%%%%%%% References %%%%%%%%%

\end{document}